\documentclass[11pt]{amsart}
\usepackage{eucal}
\usepackage{amssymb}
\usepackage{epsfig}
\usepackage{amscd}
\usepackage{enumerate}



\newcommand{\ra}{{\rightarrow}}

\renewcommand{\H}{\mathbb{H}}

\newcommand{\R}{\mathbb{R}}
\newcommand{\C}{\mathbb{C}}

\newcommand{\Fr}{\operatorname{Fr}}

\DeclareMathOperator{\Int}{Int}

\def\ie{i.e. }
\def\pslc {\mathrm{PSL}(2,\C)}


\theoremstyle{plain}
\newtheorem{thm}{Theorem}[section]

\newtheorem{lem}[thm]{Lemma}
\newtheorem{prop}[thm]{Proposition}

\newtheorem*{thm:main}{Theorem \ref{thm:main}}

\numberwithin{equation}{section}

\begin{document}

\title{Measurable rigidity for Kleinian groups}
\author{Woojin Jeon and Ken'ichi Ohshika}
\date{\today}
\maketitle
\begin{abstract}
Let $G, H$ be two Kleinian groups with homeomorphic quotients $\H^3/G$ and $\H^3/H$. We assume that $G$ is of divergence type, and consider the Patterson-Sullivan measures of $G$ and $H$.
The measurable rigidity theorem by Sullivan and Tukia says that a measurable and essentially directly measurable equivariant boundary map $\widehat k$ from the limit set $\Lambda_G$ of $G$ to that of $H$ is either the restriction of a M\"{o}bius transformation or totally singular.
In this paper, we shall show that such $\widehat k$ always exists. In fact, we shall construct $\widehat k$ concretely from the Cannon-Thurston maps of $G$ and $H$. 
\end{abstract}

\section{introduction}

In the theory of Kleinian groups, it is very important to consider deformations and rigidity.
Quasi-conformal deformations of Kleinian groups have been studied for a long time, starting from the work of Ahlfors, Bers, Kra, Maskit and Marden.
These deformations are given by considering Beltrami differentials supported on the domains of discontinuity of Kleinian groups.
On the other hand, Sullivan showed in \cite{Sul81} that there are no non-trivial quasi-conformal deformations supported on limit sets, which is derived from the ergodic property of the actions of Kleinian groups on the product of two Riemann spheres.
In particular, when the limit set is the entire Riemann sphere, his theorem implies that there is no non-trivial quasi-conformal deformation.

Sullivan moreover showed what can be called the measurable rigidity in \cite{Sul82} as follows.
Consider two isomorphic Kleinian groups $G$ and $H$ assuming that $G$ is of divergence type, and suppose that
their limit sets $\Lambda_G$ and $\Lambda_H$  have conformal measures $\mu_G$ and $\mu_H$ of the same dimension.
Suppose furthermore that there is a Borel isomorphism $f$ from the Riemann sphere to the Riemann sphere conjugating $G$ to $H$ almost everywhere. 
The measurable rigidity says that if $f^*\mu_H$ and $\mu_G$ are absolutely continuous to each other,
then $f$ almost everywhere coincides with a conformal automorphism conjugating $G$ to $H$.

Tukia showed a similar rigidity in more general settings in \cite{Tuk}.
We present his result restricting it to the case with which our theorem is concerned.
Let $G$ and $H$ be Kleinian groups as in the previous paragraph, and $\mu_G, \mu_H$ conformal measures where $\mu_G$ does not have an atom.
It is not assumed that the dimensions of $\mu_G$ and $\mu_H$ are the same, but is assumed that at least one of their dimensions is positive.
Suppose that there is an essential injective (\ie injective outside a $\mu_G$-null set), measurable, and essentially directly measurable (\ie the image of any measurable set outside some fixed $\mu_G$-null set is measurable) map $f: \Lambda_G \rightarrow \Lambda_H$ which conjugate $G$ to $H$ almost everywhere.
Then either there is a set $A$ in $\Lambda_G$ of full measure with $\mu_H(f(A))=0$ or $f$ coincides with the restriction of a conformal automorphism to $\Lambda_G$ almost everywhere and the dimensions of $\mu_G$ and $\mu_H$ coincide.

In the case when $H$ is a quasi-conformal deformation of $G$, it is obvious that there is a Borel map conjugating $G$ to $H$ as in the results above of Sullivan and Tukia.
But the existence of such a map for general isomorphic Kleinian groups $G$ and $H$, in particular if we require it to be essentially directly measurable as in Tukia's setting, is not so evident.
There is work in more general framework of  \cite[Proposition 4.3.9]{Z}, \cite[Theorem 3.1]{SZ} and \cite[Theorem 0.2]{BM}, which implies the existence of a Borel map conjugating $G$ to $H$, but still we do not know such a map is essentially directly measurable.

On the other hand,  recent developments of Kleinian group theory have made it possible to construct an equivariant map from the limit set of $G$ to that of $H$  concretely using Cannon-Thurston maps.
In this paper, we shall show such a map constructed from Cannon-Thurston maps is in fact satisfies the conditions of  Tukia's theorem provided that $G$ is of divergence type.
To be more precise, we shall prove the following.

\begin{thm}\label{thm:main}
Let $F$ be a geometrically finite, minimally parabolic, non-elementary, torsion-free
Kleinian group.
Suppose that we are given two representations $\rho_1, \rho_2 \in \mathcal{D}(F)$ with images $G$ and $H$ in $\pslc$ such that
there are homeomorphisms from $\H^3/F$ to $\H^3/G$ and $\H^3/H$ which induce $\rho_1$ and $\rho_2$  respectively between the fundamental groups.
We further assume that $G$ is of divergence type.
Let 
$\widehat i :\Lambda_F \ra \Lambda_G$ and $\widehat j :\Lambda_F \ra \Lambda_H$ be the Cannon-Thurston maps, which are guaranteed to exist by virtue of the work of Mj.
Let $\mu_F, \mu_G$ and  $\mu_H$ be the Patterson-Sullivan measures of $F, G$ and $H$ with the base point at the origin of the Poincar\'{e} ball. 
Then the following hold.
\begin{enumerate}
\item There exists a Borel  set $Z \subset \Lambda_G$  with $\mu_G(Z)=0$ such that $\widehat j \widehat i^{-1}$
is homeomorphism from $\Lambda_G \setminus Z$ to its image.  
\item Either $\widehat j \widehat i^{-1}$ is the restriction of a M\"{o}bius transformation or there exists 
a Borel  subset $A$ of $\Lambda_G\setminus Z$ such that $\mu_G(A)=\mu_G(\Lambda_G)$ and $\mu_H(\widehat j\widehat i^{-1}(A))=0$.  
\end{enumerate}
\end{thm}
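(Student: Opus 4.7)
\emph{Strategy.} The plan is to produce a $\mu_G$-null Borel set $Z\subset\Lambda_G$ such that $\widehat k:=\widehat j\widehat i^{-1}$ restricts to a well-defined equivariant homeomorphism on $\Lambda_G\setminus Z$, and then invoke the Tukia rigidity theorem recalled in the introduction for (2). The two essential ingredients are: Sullivan's dichotomy, giving $\mu_G(\Lambda_G\setminus\Lambda_G^c)=0$ since $G$ is of divergence type ($\Lambda_G^c$ denotes the conical limit set); and Mj's description of Cannon--Thurston fibers in terms of ending laminations, which implies that for each $\eta\in\Lambda_G^c$ the fiber $\widehat i^{-1}(\eta)$ is a singleton (otherwise its elements would be leaf endpoints of an ending lamination of $G$, a non-conical behaviour), and analogously for $\widehat j$.

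\emph{Construction of $Z$.} Set $Z_0:=\Lambda_G\setminus\Lambda_G^c$, a $\mu_G$-null Borel set. On $\Lambda_G\setminus Z_0$ the inverse $\widehat i^{-1}$ is single-valued, Borel by Lusin--Souslin, and continuous by a standard subsequence-compactness argument (any $\Lambda_F$-accumulation point of $\widehat i^{-1}(\eta_n)$ must lie in the singleton fiber over $\lim\eta_n$). To make $\widehat k$ injective one must further delete $Z_1:=\{\eta\in\Lambda_G^c:\widehat j(\widehat i^{-1}(\eta))\notin\Lambda_H^c\}$, since by the fiber description applied to $\widehat j$ these are exactly the $\eta$ whose $\widehat i$-preimage is $\widehat j$-identified with another point of $\Lambda_F$. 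Assuming $\mu_G(Z_1)=0$, the map $\widehat k$ restricts to a continuous injection on $\Lambda_G\setminus(Z_0\cup Z_1)$; a symmetric compactness argument on the $\widehat j$-side (every $\Lambda_F$-accumulation point of $\widehat i^{-1}(\eta_n)$ has a prescribed $\widehat j$-image, hence is uniquely determined once $\widehat j$ is injective there) supplies continuity of the inverse, producing the required homeomorphism.

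\emph{The hard step and application of Tukia.} The heart of the proof is the null-set claim $\mu_G(Z_1)=0$. Using equivariance of $\widehat i,\widehat j$ (with respect to $\rho_1,\rho_2$) together with the $H$-invariance of $\Lambda_H^c$, the set $Z_1$ is $G$-invariant in $\Lambda_G$; by ergodicity of the $G$-action on $(\Lambda_G,\mu_G)$, which holds for $G$ non-elementary of divergence type, one has $\mu_G(Z_1)\in\{0,\mu_G(\Lambda_G)\}$. The "full measure" alternative must be excluded by a direct geometric argument showing that conical recurrence of the geodesic to $\widehat i(\xi)$ in $\H^3/G$ is transported to conical recurrence of the corresponding geodesic in $\H^3/H$ via the topological homeomorphism induced by $\rho_2\rho_1^{-1}$ and the explicit geometry of the two Cannon--Thurston maps; this is where the bulk of the work should lie and what I expect to be the main obstacle, since $H$ is \emph{not} assumed to be of divergence type, so Sullivan's dichotomy is unavailable on the $H$-side. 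Once $\mu_G(Z_1)=0$ is in hand, $\widehat k$ on $\Lambda_G\setminus Z$ is Borel, essentially injective, equivariant, and essentially directly measurable (the last by injectivity plus Lusin--Souslin); together with positive Patterson--Sullivan dimension and atomlessness of $\mu_G$ (both consequences of $G$ being non-elementary and of divergence type), all hypotheses of Tukia's theorem are met, and its conclusion furnishes precisely the dichotomy of part (2).
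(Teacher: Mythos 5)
Your construction of $Z_0=\Lambda_G\setminus\Lambda_{G,c}$ and your reduction of part (2) to Tukia's theorem are consistent with the paper, but there is a genuine gap at exactly the point you yourself flag as ``the main obstacle'': the claim $\mu_G(Z_1)=0$ is never proved, and the route you propose for it is the wrong one. Your $Z_1$ is the set of conical limit points of $G$ whose $\widehat k$-image fails to be conical in $H$. This is strictly larger than what injectivity requires: the non-injectivity set of $\widehat k$ is merely \emph{contained} in $Z_1$ (via the non-conicality criterion applied to $\widehat j$), and there is no reason for $Z_1$ itself to be null. When $H$ is geometrically infinite of convergence type its conical limit set is $\mu_H$-null, and nothing forces $\widehat k$ to send $\mu_G$-almost every point into it --- the ``totally singular'' alternative of part (2) exists precisely to accommodate such behaviour. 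Your proposed exclusion of the conull alternative by ``transporting conical recurrence'' through the homeomorphism $\H^3/G\to\H^3/H$ cannot work in general: that homeomorphism is only topological, does not carry geodesics to quasi-geodesics, and conicality is not an invariant of the conjugacy. So even granting ergodicity, the dichotomy for $Z_1$ may well resolve to ``conull,'' in which case your $Z$ swallows everything while the theorem remains true.

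The mechanism the paper actually uses is entirely absent from your outline, and it targets the smaller, correct set. One works on product spaces: $End_F(G)=\{(\xi_1,\xi_2)\in\Lambda_F^{(2)}:\widehat i(\xi_1)=\widehat i(\xi_2)\}$ (and the analogous $End_G(H)$ inside $(\Lambda_G\setminus\widehat i(end_F(G)))^{(2)}$) is a closed invariant set, hence null or conull by ergodicity of the diagonal action on the square, and conullity is excluded by exhibiting a pair of loxodromic fixed points outside it. The delicate step is passing from nullity of $End$ in the square to nullity of its projection $end$ in $\Lambda_G$; this is Soma's cross-ratio argument: if $end_G(H)$ were conull, the locally defined pairing $\psi$ matching a point $a$ with a point identified with it under $\widehat k$ would yield a $G$-invariant Borel function $\Psi(a,b)=|cr(a,b,\psi(b),\psi(a))|$ on a conull subset of $\Lambda_G^{(2)}$ that blows up as $b\to a$, hence is non-constant, contradicting ergodicity of the diagonal action. (Making $\psi$ single-valued off a null set also requires Proposition 3.1, that only countably many fibers have more than two points, together with atomlessness of $\mu_G$.) Crucially, this argument uses only the divergence type of $G$ and says nothing about conicality in $H$. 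The remaining pieces of your outline --- Borelness via the countable-to-one image theorem, atomlessness of $\mu_G$, positivity of $\delta_G$, and the Tukia dichotomy --- are fine, but without the missing step the theorem is not proved.
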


The Cannon-Thurston map $\widehat i$ of a Kleinian group $G$ is defined to be a continuous equivariant map from the limit set  $\Lambda_F$ of $F$ to the limit set $\Lambda_G$ of $G$, where $F$ is a minimally parabolic geometrically finite group isomorphic to $G$ (see section \ref{DF}). 
Although $\widehat i^{-1}$ may not be well defined as a map, we can show that $\widehat i$ is a Borel measurable isomorphism under the condition that $G$ is of divergence type. Then we can define $\widehat k$ to be $\widehat j \widehat i^{-1}$ and can show that it is both Borel measurable and Borel essentially directly measurable.

\section{Preliminaries}
\subsection{Hyperbolic 3-manifolds}\label{hyp}
We use the symbol $\H^3$ to denote the hyperbolic 3-space. 
We need to use both the Poincar\'{e} ball model and the upper half space model of $\H^3$ depending on the situation.
The ball model is the open unit ball $\{x\in \R^3 \mid \vert x\vert <1\}$ with the metric $2\vert dx \vert/\sqrt{1-\vert x\vert^2}$ where $\vert x \vert$ is the usual Euclidean norm of $x$, and the upper half space model is $\{x=(x_1, x_2, x_3) \in \R^3 \mid x_3>0\}$ with the metric $\vert dx\vert/x_3$.
The ideal boundary of the upper half space can be identified with $\hat\C=\C \cup\{\infty\}$,
and is also denoted by $S^2_\infty$.
A discrete subgroup $G$ of $\pslc$ is called a {\it Kleinian group}. If $G$ has no elliptic elements, the quotient $N:=\H^3/G$ becomes a hyperbolic 3-manifold. 
Throughout this paper, every Kleinian group is assumed to be   finitely generated and without elliptic elements.

For a Kleinian group $G$, its {\it limit set} $\Lambda_G$ is the closure of the set of accumulation points on $S^2_\infty$ of an orbit $G \cdot o$ with $o\in\H^3$. 
The complement $\Omega_G:=S^2_\infty \backslash \Lambda_G$ is the domain of discontinuity for the action of $G$ on $\hat\C$. 
The {\it convex hull} of $G$ is defined to be the smallest convex set in $\H^3$ whose closure in $\H^3\cup S^2_\infty$ contains $\Lambda_G$. The {\it convex core} of $G$ is the quotient by $G$ of its convex hull.
The Kleinian group $G$ is called {\it geometrically finite} (resp.\ {\em convex cocompact}) if its convex core has finite volume (resp.\ if its convex core is compact). 
The Kleinian group $G$ is called {\it minimally parabolic} if every parabolic element  is contained in a rank-2 free abelian subgroup of $G$.

The thin part $N_\epsilon\subset N$ is the set of points in $N$ where the injectivity radius is less than $\epsilon$. It is known that there exists some $\epsilon>0$, called Margulis constant, such that 
each component of $N_\epsilon$ is either a {\it cusp neighbourhood} or a solid torus neighbourhood of a closed geodesic, the latter of which is called a {\it Margulis tube}.
Let $N_0$ be the {\it noncuspidal} part of $N$, which is the complement of the cusp neighbourhoods in $N$ for some Margulis constant $\epsilon$. 
Then we can find a {\it relative compact core} (see \cite{FM, Sco}) $C\subset N_0$, which  is a compact 3-manifold with boundary such that the inclusion of $(C, C \cap \partial N_0)$ into $(N_0, \partial N_0)$ is a homotopy equivalence of pairs. 
The ends of $N_0$ are in one-to-one correspondence with the components of $\Fr C$ in $N_0$. 
When an end $e$ corresponds to a component $S_e$ of $\Fr C$, we say that $e$ {\it faces} $S_e$. 

An end of $N_0$ is called {\it geometrically finite} if it has a neighbourhood which is disjoint from the convex core of $G$.
Clearly $G$ is geometrically finite if and only if every end of $\H^3/G$ is geometrically finite.
A geometrically finite end $e$ corresponds to a component $\Sigma_e$ of $\Omega_G/G$ attached to $e$ in the Kleinian manifold $N_G:=(\H^3\cup\Omega_G)/G$. 
The point of the Teichm\"{u}ller space of $\Sigma_e$ determined by the conformal structure coming from $\Omega_G$ is called the {\it conformal structure at infinity} of $e$. 

%
\subsection{Deformation spaces}\label{DF}
For a group $F$ we denote by $\mathcal D(F)$ the deformation space of $F$, that is, the space of faithful discrete representations of $F$ into $\pslc$ modulo conjugacy, endowed with the topology induced from the representation space.
To denote a point in $\mathcal D(F)$, we use either a representation $\phi: F \rightarrow \pslc$ in the class or the Kleinian group $\phi(F)$ or when necessary, the pair $(F, \phi)$.
By Thurston's uniformisation theorem, every finitely generated Kleinian group can be regarded as a point in $\mathcal D(F)$ for a minimally parabolic geometrically finite Kleinian group $F$.

\subsection{Ending laminations}\label{ending}
For a closed hyperbolic surface $S$, 
a {\it geodesic lamination} $\lambda$ on $S$ is a closed set consisting of disjoint simple geodesics called {\it leaves}.
A leaf of $\lambda$ is said to be isolated if no point of the leaf is an accumulation points of points on the other leaves.
A minimal component of $\lambda$ is a sublamination $\mu$ of $\lambda$ such that the closure of each leaf of $\mu$ coincides with $\mu$.
A geodesic lamination is decomposed into finitely many minimal components and finitely many isolated leaves which are not closed leaves.

Each component $V$ of $S\setminus\lambda$ is called as a {\it complementary region} of $\lambda$ in $S$.
A lift of a complementary region to $\H^2$ is an ideal polygon with countably many ideal vertices.
If a complementary region itself is simply connected, it is a ideal polygon with finite vertices, which coincides with its lift in $\H^2$.
We say that a geodesic lamination is {\it filling} when each of its complementary regions is simply connected.

Now we shall define the ending lamination $\lambda_e$ for a geometrically infinite end $e$ of $N=\H^3/G$ as follows.
Let $N$ be a hyperbolic 3-manifold and $e$ an end of $N_0$.
As explained before, $e$ faces a component $S_e$ of the frontier of the relative compact core $C$.
Suppose that $e$ is geometrically infinite.
By the tameness theorem proved by Bonahon \cite{Bona}, Agol \cite{Ag} and Calegari-Gabai \cite{CG}, we see that we can take $C$ so that the complement of $C$ containing $e$, which we denote by $U_e$, is homeomorphic to $S_e \times (0,\infty)$.
Since $e$ is geometrically infinite, every neighbourhood of $e$ intersects a closed geodesic, and moreover, as was shown in Bonahon \cite{Bona} and Canary \cite{Ca}, there is a sequence of simple closed curves $\gamma_n$ in $S_e$ which are homotopic in $U_e \cup S_e$ to closed geodesics  tending to $e$.
Fix a hyperbolic metric on $S_e$ and regard $\gamma_n$ as a closed geodesic on $S_e$.
We consider the Hausdorff limit of $\gamma_n$.
It can be shown that this limit geodesic lamination has only one minimal component $\lambda_e$, which is filling.
We call $\lambda_e$ the {\em ending lamination} of $e$, which can be proved to be unique by Bonahon's intersection lemma appearing in \cite{Bona}.
The ending lamination theorem proved by Brock, Canary and Minsky in \cite{MinE, BCM1} says that the end invariant consisting of conformal structures at infinity and of ending laminations uniquely determines the Kleinian group $G$ up to conjugacy.

For our later use, we need to consider the union of ending laminations together with parabolic locus.
Consider a Kleinian group $(G,\phi)$ contained in $\mathcal D(F)$ for a minimally parabolic geometrically finite Kleinian group $F$, and suppose that there is a homeomorphism $\Phi : \H^3/F \rightarrow \H^3/G$ inducing an isomorphism conjugate to $\phi$ between the fundamental groups.
Let $C$ be a convex core of $\H^3/F$.
We can isotope $\Phi$ so that $\Phi(C) \cap (\H^3/G)_0$ is a relative compact core of $(\H^3/G)_0$ such that each component $U$ of $(\H^3/G)_0 \setminus \Phi(C)$ is homeomorphic to
$S \times (0,\infty)$ where $S$ is the frontier component of $\Phi(C)$ touching $U$.
We fix a hyperbolic metric on $\partial C$.
Then we consider the union $\lambda$ of all ending laminations and core curves of the annuli among $\Phi(C) \cap \Fr (\H^3/G)_0$ and pull them back to $\partial C$ by $\Phi^{-1}$.
We call the geodesic lamination on $\partial C$ obtained by  isotoping $\lambda$ the {\em total ending lamination} for $(G,\phi)$.

\subsection{Cannon-Thurston maps}\label{CT}
Let $(G, \rho)$ be a finitely generated Kleinian group in $\mathcal D(F)$ for a minimally parabolic geometrically finite Kleinian group $F$.
Fixing a point $o \in \H^3$, we can construct a $\rho$-equivariant map $i$ between the orbit  $F\cdot o$ and the orbit of $G\cdot o$ by setting $i(o)=o$.
If $i$ extends to a continuous equivariant map between the limit sets $\widehat i : \Lambda_F \rightarrow \Lambda_G$, the latter map is called the Cannon-Thurston map.
The existence of Cannon-Thurston maps was proved  by Mj for  Kleinian surface groups in \cite{MjSurf}, and the general case was dealt with in \cite{Mj}.

Mj also gave a criterion for limit points of $F$ to be non-injective under $\widehat i$ as follows.
Consider the Kleinian manifold $(\H^3\cup \Omega_F)/F$.
Then by the nearest point map, we can identify $\partial C$ with $\Omega_F/F$.
Let $\lambda$ be the total ending lamination for $(G,\phi)$ regarded as lying on $\Omega_F/F$.
We consider the lift $\widetilde \lambda$ of $\lambda$ to the universal cover $\Omega_F$.
We define an equivalence relation $R$ on the points on $\Lambda_F$ to be the smallest one having the property that  $xRy$ if $x$ and $y$ are the two endpoints of a leaf of $\widetilde \lambda$.
What Mj showed is that $\widehat i(x)=\widehat i(y)$ if and only if $xRy$.

\subsection{Conformal density}\label{conf}
For $x$ in $\H^3$ and $\eta \in S^2_\infty$, we take a geodesic ray $\alpha(t)$ from $\alpha(0)=x\in \H^3$ toward $\eta\in S^2_\infty$.
Then we define the {\it Busemann function} $b_{x, \eta}(\cdot)$ at $\eta$ with $b_{x, \eta}(x)=0$ by $$b_{x, \eta}(y)=\lim\limits_{t\ra \infty}(d_{\H^3}(y, \alpha(t))-d_{\H^3}(x, \alpha(t))).$$
For $x,y$ in $\H^3$ and $s \geq 0$, we consider the Poincar\'{e} series for a Kleinian group $G$, defined by $$g_s(x,y)=\sum_{g \in G} e^{-sd_{\H^3}(x,g\cdot y)}.$$
The critical exponent $\delta_G$ of $G$ is defined to be
$$\delta_G=\limsup\limits_{r \ra \infty}\frac{1}{r}\log(\#\{g\in G \mid d_\H^3(o, g\cdot o)\leq r\}).$$
We say that $G$ is of  {\it divergence type} if $g_s(x,y)$ diverges at $s=\delta_G$. 
Otherwise $G$ is said to be of {\it convergence type}.
These definitions are independent of the choice of $x,y\in \H^3$.

For $x \in \H^3$ and $s \geq 0$, we consider a measure defined by $$\mu_x^s=\frac{1}{g_s(y,y)}\sum_{g \in G} e^{-sd_{\H^3}(x,g\cdot y)}\delta_{g\cdot y},$$ where $\delta_{g\cdot y}$ denotes the Dirac measure of weight $1$ at $g\cdot y$. 
Then we can find a sequence $\{s_i\}$ with $s_i \ra \delta_G+0$ such that $\mu_x^{s_i}$ weakly converges to a finite measure $\mu_x$ on the compact space $\H^3\cup S^2_\infty$.
When $G$ is of divergence type, $\mu_x$ has its support on the limit set $\Lambda_G$. 
The family of measures $\{\mu_x\}$ has two properties $$\frac{d\mu_x}{d\mu_y}(\eta)=e^{-\delta_G b_{x,\eta}(y)},\   g^{*}\mu_x=\mu_{g^{-1}\cdot x},$$
where the pull-back measure $g^*\mu_x$ is defined by setting $(g^*\mu_x)(A):=\mu_x(g\cdot A)$ for any $\mu_x$-measurable set $A\subset \Lambda_G$.
A family of measures parameterised by $x \in \H^3$ having these properties is called a $G$-invariant {\it conformal density} of dimension $\delta_G$.

Even when $G$ is of convergence type, it is still possible to construct a conformal density $\{\mu_x\}$ of dimension $\delta_G$ by increasing the Dirac mass on each orbit point suitably as was shown in \cite{Pa, Sul79}.
We call an element of a $G$-invariant conformal density a {\it conformal measure}. 
In particular, the conformal measure $\mu_x$ constructed above is called as the {\it Patterson-Sullivan measure}  of $G$ based at $x$ whether or not $G$ is of divergence type.

The dimension $\delta_G$ of a conformal density $\{\mu_x\}$ is always positive if $G$ is {\it non-elementary} i.e., if $\Lambda_G$ has more than  two points (see \cite[Corollary 2]{Sul79} or \cite[Proposition 3.8]{CS}).
A trivial example of a 2-dimensional conformal density on $S^2_\infty$ is the area density $\{area_x\}$ obtained by identifying the visual sphere based at the point $x\in \H^3$ with $S^2_\infty$.
If we use the Poincar\'{e} ball model of $\H^3$ setting $o$ to be the origin of the ball, then $area_o$ is equal to the Lebesgue measure on $S^2_\infty$ up to constant multiple.
As an application of the tameness theorem combined with the work of Culler and Shalen in \cite[Proposition 3.9.]{CS}, it is known that if a Kleinian group $G$ satisfies $\Lambda_G=S^2_\infty$ then any $G$-invariant conformal measure $\mu_x$ is equal to the area density up to homothety. In fact, we have $K\mu_x=area_x$ with a constant $K >0$ for any $x\in \H^3$.

A point in $\Lambda_G$ is called {\it a conical limit point} if it is an accumulation point of an orbit $G\cdot o$ in $\H^3$ contained in a fixed radial neighbourhood of a geodesic ray ending at that point. We denote the set of conical limit points by $\Lambda_{H,c}$. 
We use the symbol $\Delta$ to denote the diagonal set in general.
For a subset $X$ of $S^2_\infty$, we use the symbol $X^{(2)}$ to denote $(X \times X) \setminus \Delta$.

\begin{thm}(Hopf, Tsuji, Sullivan, Thurston, Canary and Yue)\label{conicalfull} 
Let $G$ be a finitely generated torsion-free Kleinian group, and $\mu_x$ its Patterson-Sullivan measure.
Then the following are equivalent.
\begin{enumerate}
\item The conical limit set $\Lambda_{G,c}$ has  full $\mu_x$-measure.
\item The conical limit set $\Lambda_{G,c}$ has positive $\mu_x$-measure.
\item The diagonal action of $G$ on $(S^2_\infty)^{(2)}$ is $\mu_x\times \mu_x$-ergodic.
\item $G$ is of divergence type.
\item $G$ is either geometrically finite or $\Lambda_G=S^2_\infty$.
\end{enumerate}

\end{thm}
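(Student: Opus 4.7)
The plan is to organize the proof as two blocks: Block A, the ergodic-theoretic equivalence (1)$\Leftrightarrow$(2)$\Leftrightarrow$(3)$\Leftrightarrow$(4), which is the classical Hopf--Tsuji--Sullivan theorem and is essentially measure-theoretic, and Block B, the equivalence (4)$\Leftrightarrow$(5), which relies on the tameness theorem of Agol and Calegari--Gabai together with Canary's work on geometrically infinite tame ends.

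For Block A, the central tool is Sullivan's shadow lemma: for $R$ sufficiently large and for all $g\in G$,
$$\mu_o(\mathcal O_R(g\cdot o))\asymp e^{-\delta_G\, d_{\H^3}(o,g\cdot o)},$$
where $\mathcal O_R(g\cdot o)$ is the shadow on $S^2_\infty$ from $o$ of the $R$-ball around $g\cdot o$. By definition a point of $\Lambda_G$ is a conical limit point iff it lies in infinitely many such shadows. Then (1)$\Rightarrow$(2) is trivial, (2)$\Rightarrow$(4) follows from Borel--Cantelli (positive measure of the $\limsup$ of shadows forces $\sum_g e^{-\delta_G d(o,g\cdot o)}=\infty$), and (4)$\Rightarrow$(1) follows from the converse Borel--Cantelli combined with the quasi-independence estimate for shadows that the shadow lemma supplies. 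The equivalence (3)$\Leftrightarrow$(4) is the Hopf dichotomy: one builds the Bowen--Margulis--Sullivan measure on the geodesic flow space $T^1(\H^3/G)$ from $\mu_x\times \mu_x$ on $(S^2_\infty)^{(2)}$, shows that divergence type is equivalent to conservativity of this flow-invariant measure (Hopf decomposition: the dissipative part would correspond to a convergent Poincar\'e series), and then, by a Hopf-type argument in which $L^\infty$ flow-invariants extend to boundary-invariants, concludes that conservativity is equivalent to ergodicity of the diagonal $G$-action on $(S^2_\infty)^{(2)}$.

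For (5)$\Rightarrow$(4) in Block B, Sullivan's original orbit count around cusps handles the geometrically finite case. If instead $\Lambda_G=S^2_\infty$, then $\delta_G=2$ and $\mu_x$ is a constant multiple of Lebesgue measure on $S^2_\infty$; Thurston's theorem gives ergodicity of the geodesic flow on $T^1(\H^3/G)$, which by the already established (3)$\Leftrightarrow$(4) forces divergence type. The reverse implication (4)$\Rightarrow$(5) is the genuinely hard direction, and I would argue contrapositively: assume $G$ is not geometrically finite. By the tameness theorem, $N=\H^3/G$ is homeomorphic to the interior of a compact 3-manifold, so it has finitely many ends and at least one geometrically infinite end $e$. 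Approximating $e$ by pleated surfaces exiting through it, and using Canary's covering theorem to control the geometry of $e$, one shows that the set of $\xi\in S^2_\infty$ whose geodesic ray from $o$ exits through $e$ has positive Lebesgue measure. The Ahlfors measure conjecture, now a theorem for tame groups by Canary, then forces $\Lambda_G$ to be either Lebesgue-null or all of $S^2_\infty$, and the previous step rules out the null alternative.

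The principal obstacle is this last chain: tameness $\Rightarrow$ Canary's analytic tameness of geometrically infinite ends $\Rightarrow$ Ahlfors conjecture $\Rightarrow$ full limit set. All of the 3-manifold topology in the theorem is concentrated here; every other step is either formal or a consequence of the shadow lemma and the Hopf dichotomy. One subtlety I would take care with is that the equivalences should be stated so that they hold for \emph{any} Patterson--Sullivan measure, not only the divergent one, so that the Borel--Cantelli estimates must be phrased using the defining properties of a conformal density of dimension $\delta_G$ rather than the Dirac-sum construction itself.
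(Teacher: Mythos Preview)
Your Block~A is correct and more detailed than the paper, which largely proceeds by citation. One structural difference worth noting: the paper records that Sullivan's argument gives (4)$\Rightarrow$(1)(2)(3) only when $\delta_G>1$, and rather than redoing the ergodic theory in the remaining case it \emph{uses} the equivalence (4)$\Leftrightarrow$(5), cited directly from Canary, to finish: assuming (4), if $G$ is not geometrically finite then (5) forces $\Lambda_G=S^2_\infty$, hence $\delta_G=2>1$ and Sullivan's argument applies; if $G$ is geometrically finite then $\Lambda_G\setminus\Lambda_{G,c}$ consists of countably many parabolic fixed points, giving (1) directly. Thus the paper's dependency runs (4)$\Leftrightarrow$(5) first, then (4)$\Rightarrow$(1) via (5), whereas you cleanly separate the two blocks. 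Both routes are valid.

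There is, however, a genuine gap in your sketch of (4)$\Rightarrow$(5). You say you argue contrapositively by assuming $G$ is not geometrically finite, but this is only half of $\neg(5)$, and your argument then never uses the divergence hypothesis. In effect you are claiming that every tame, non-geometrically-finite Kleinian group has $\Lambda_G=S^2_\infty$, which is false: a singly degenerate surface group has a geometrically infinite end $e$ yet $\Omega_G\neq\emptyset$, and by the Ahlfors conjecture its limit set has Lebesgue measure zero. In particular your key assertion---that the set of $\xi\in S^2_\infty$ whose geodesic ray from $o$ exits through $e$ has positive Lebesgue measure---fails there, since every such $\xi$ lies in $\Lambda_G$. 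The correct contrapositive is $\neg(5)\Rightarrow\neg(4)$: assuming $G$ is tame, not geometrically finite, \emph{and} $\Lambda_G\neq S^2_\infty$, one must show $G$ is of convergence type. The paper simply cites Canary for this; if you wish to unpack it, note that the Ahlfors-conjecture step (Lebesgue-nullity of $\Lambda_G$) and the convergence-type step are separate ingredients, and the latter requires an argument that geodesics escaping through a simply degenerate end fail to be recurrent, forcing $\mu_x(\Lambda_{G,c})=0$. Your sketch collapses these two inputs into one and loses the divergence hypothesis along the way.
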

\begin{proof}
The fourth and the fifth are equivalent as was shown in  Section 9 of the paper \cite{Ca} by Canary.
By Sullivan\cite[pp. 3]{Sul79}, we see that the first three conditions are equivalent and imply the fourth, and that the fourth implies the first three if $\delta_G>1$.
Suppose that $G$ is of divergence type, then we have the following.
If $G$ is not geometrically finite, then $\delta_G=2$ because $\Lambda_G=S^2_\infty$ by the fifth condition and $K\mu_o=area_o$ for some constant $K>0$. 
Therefore as explained above this implies the first three.
If $H$ is geometrically finite, then $\Lambda_G$ is the union of $\Lambda_{G,c}$ and parabolic fixed points, which are countable, and hence $\Lambda_{G,c}$ has the full $\mu_x$-measure.
Thus the fourth implies the first three in both of the two cases. 
The equivalence of the first four conditions also follows from a more general result by Yue in \cite[pp. 77]{Yue2}.
\end{proof}

\subsection{Measure theoretic notions}
Let $X$ be a complete separable metric space with the Borel $\sigma$-algebra generated by the open sets. A measure space $(Y, \nu)$ is called {\it standard} or {\it Lebesgue} if it is measurably isomorphic to a Borel subset of $X$. 
Two standard measure spaces are Borel isomorphic if and only if they have the same cardinality.

For a discrete group $G$ with the counting measure, a {\it measurable action} of $G$ on a standard measure space
$(Y, \nu)$ is defined to be a measurable map $\Theta:G\times Y \ra Y$ from the product measure space satisfying the following properties.

\begin{enumerate}
\item $\Theta(id, y)=y$ for all $y\in Y$.
\item $\Theta(g_1, \Theta(g_2, y))=\Theta(g_1g_2, y)$.
\end{enumerate}

Since $G$ is a group, $\Theta(g, \cdot)$ has the measurable inverse $\Theta(g^{-1},\cdot)$ for any $g\in G$.
We denote $\Theta(g, y)$ simply by $g\cdot y$.
The measure $\nu$ is said to be {\it $G$-invariant} if $\nu(A)=\nu(g\cdot A)$ for any measurable set $A$ in $Y$ and $g\in G$. 
For an element $g\in G$, we define the push-forward $g_*\nu$ and the pull-back $g^*\nu$  by setting $g_*\nu(A)=\nu(g^{-1}\cdot A)$ and $g^*\nu(A)=\nu(g\cdot A)$. 
We say that $\nu$ is {\it quasi-invariant} under $G$ if $g_*\nu$ and $\nu$ are equivalent for all $g \in G$, i.e., if $\nu(A)=0 \Leftrightarrow g_*\nu(A)=0$.

A measurable {\it $G$-space} is a standard measure space $(Y, \nu)$ with a measurable $G$-action such that $\nu$ is quasi-invariant under $G$. 
We say that  a set $Z\subset Y$ is {\it null} in $(Y,\nu)$ if $Z$ is a subset of a measurable set $Z'$ with $\nu(Z')=0$. 
A set is said to be  {\it conull} when it is  the complement of a null set.    
A measurable $G$-space is called {\it ergodic} if any $G$-invariant measurable set is either null or conull. 
This definition is equivalent to the following weaker condition (see \cite[Chapter 2]{FK}, for example):
a measurable $G$-space is ergodic if and only if any essentially invariant measurable set is either null or conull.
Another way to express this condition is:  a $G$-space is ergodic if and only if any essentially $G$-invariant measurable function from the $G$-space is constant almost everywhere.

For a Kleinian group $G$ with a conformal measure $\mu_x$ on $S^2_\infty$, both 
$(S^2_\infty, \mu_x)$ and $((S^2_\infty)^{(2)}, \mu_x\times \mu_x)$
are $G$-spaces. 
If $G$ is of divergence type, both of them are ergodic $G$-spaces. 
If $G$ is geometrically infinite and of divergence type, as mentioned in Section \ref{conf}, we know that a conformal density is equal to the area density up to constant multiple.
If $G$ is geometrically finite then the dimension of a conformal density is equal to the Hausdorff dimension of $\Lambda_G$. Furthermore, any two ergodic conformal densities of the same dimension are equal up to a constant multiple (see \cite[pp. 181]{Sul79}). 
Thus, the conformal density is unique (up to constant multiples) if $G$ is of divergence type.
On the other hand, if $G$ is of convergence type, conformal densities are neither unique nor ergodic in general. 
For this, see \cite{AFT, BJ}.

\section{Non-injective points}
Let $F$ be a minimally parabolic geometrically finite Kleinian group and $(G,\phi)$ a Kleinian group in $\mathcal D(F)$ such that there is a homeomorphism from $\H^3/F$ to $\H^3/G$ inducing an isomorphism conjugate to $\phi$ between the fundamental groups.
We let $\widehat i :\Lambda_F \ra \Lambda_G$ be the Cannon-Thurston map, which is guaranteed to exist by the work of Mj.
We shall need the following proposition to show that $\widehat i$ is a Borel measurable isomorphism.
\begin{prop}\label{Complementary}
There are only countably many points  $\eta\in \Lambda_F$ such that the cardinality of $\widehat i^{-1}(\widehat i(\eta))$ is greater than $2$.
\end{prop}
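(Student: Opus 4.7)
By Mj's criterion recalled in Section \ref{CT}, $\widehat i^{-1}(\widehat i(\eta))$ is exactly the $R$-equivalence class $[\eta]$ of $\eta$ in $\Lambda_F$, where $R$ is the smallest equivalence relation for which $xRy$ whenever $x, y$ are the two endpoints of a leaf of $\widetilde \lambda$. So the proposition amounts to proving that $\bigcup \{[\eta] : |[\eta]|>2\}$ is countable. The first observation is that if a point $\eta \in \Lambda_F$ is the endpoint of only one leaf of $\widetilde \lambda$, then the only $R$-relation applied at $\eta$ identifies it with the other endpoint of that leaf; consequently, if $[\eta]$ has more than two elements, then at least one point of $[\eta]$ must lie in the set $B \subset \Lambda_F$ of \emph{branching points}, defined as points which are endpoints of at least two leaves of $\widetilde \lambda$. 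Since distinct equivalence classes are disjoint, this shows that the number of equivalence classes of size greater than two is bounded by $|B|$.

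The core of the plan is therefore to prove that $B$ is countable. I would argue that every branching point $p$ is an ideal vertex of a complementary region of $\widetilde \lambda$ in $\Omega_F$: choosing any two leaves $\ell_1, \ell_2$ of $\widetilde \lambda$ ending at $p$ and taking the two leaves closest to $p$ in the two sectors at $p$ cut out by $\ell_1 \cup \ell_2$, one obtains a complementary region having $p$ as an ideal vertex. The lamination $\lambda$ on $\partial C$ is, by construction in Section \ref{ending}, a finite union of filling minimal laminations on subsurfaces together with finitely many core curves, so $\partial C \setminus \lambda$ has only finitely many components; the components coming from the filling minimal pieces are finite-sided ideal polygons, and the remaining gap regions are bounded by finitely many closed geodesics. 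Passing to the cover $\Omega_F$, we obtain countably many complementary regions of $\widetilde \lambda$, each contributing only countably many ideal vertices (finitely many in the polygonal case; in the gap case, the vertices come from lifts of finitely many closed boundary curves, which are again countable). Hence $B$ is countable.

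To finish, I would show that each $R$-equivalence class is itself at most countable. Since every branching point is a vertex of only countably many complementary regions of $\widetilde \lambda$, only countably many leaves of $\widetilde \lambda$ end at a given branching point. Therefore a breadth-first traversal of $[\eta]$ — starting from any point of $[\eta]$ and, at each step, replacing every branching point reached so far by the other endpoints of all leaves ending at it — enumerates $[\eta]$ as a countable set. The union of countably many countable classes is countable, yielding the proposition.

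The main obstacle I anticipate is the second paragraph, namely cleanly controlling the ideal vertices of complementary regions of $\widetilde \lambda$ in $\Omega_F$. The subtlety is that certain ``gap'' regions between the subsurface-filling laminations or adjacent to core curves of $\lambda$ may fail to be simply connected, so their lifts could a priori admit uncountably many ideal vertices; this must be ruled out by exploiting that such gaps are bounded by only finitely many simple closed geodesics on $\partial C$, whose preimage in $\Omega_F$ consists of countably many leaves and hence contributes only countably many candidates for branching points.
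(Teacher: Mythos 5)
Your overall strategy is the same as the paper's: reduce the statement to counting ideal vertices of complementary regions of the total ending lamination, which are countable because downstairs there are finitely many complementary regions with finitely many spikes. Your first paragraph (a class of size $>2$ must contain a point that is an endpoint of at least two leaves) and your count of ideal vertices within a single component are fine and essentially reproduce the paper's count.

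However, there is a genuine gap at the crucial step ``every branching point $p$ is an ideal vertex of a complementary region of $\widetilde\lambda$.'' Your argument takes ``the two sectors at $p$ cut out by $\ell_1\cup\ell_2$,'' which tacitly assumes that the two leaves $\ell_1,\ell_2$ ending at $p$ lie in the \emph{same} component of $\Omega_F$. The total ending lamination lives on all of $\Omega_F/F$, and a priori a point $p\in\Lambda_F$ could be an endpoint of one leaf lifted to a component $\Omega_1$ and of another leaf lifted to a different component $\Omega_2$. Such a $p$ is a branching point in your sense, it produces a class of size at least $3$, and it need not be an ideal vertex of any complementary region (each leaf could be a generic, non-boundary leaf of a minimal lamination in its own component). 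Since the set of endpoints of leaves of a lifted minimal filling lamination in a single component is typically uncountable, nothing in your argument prevents the intersection of two such endpoint sets from being uncountable, which would destroy the countability of $B$. This is exactly the content of the paper's Lemma \ref{three points}: an end invariant point for one component $S_1$ of $\Omega_F/F$ cannot be an end invariant point for another component $S_2$. Its proof is not soft --- it invokes the argument of Lecuire (\cite[Lemma C.1]{Le}) to show that two asymptotic leaves lying over distinct components would force the corresponding minimal laminations to be homotopic in $N_F$, contradicting that they are ending laminations or parabolic curves attached to distinct ends. This topological input is the essential missing ingredient in your proposal; the remaining points you flag (non--simply connected gap regions, countably many leaves through a point, countability of each class) are handled correctly and are consistent with the paper's treatment.
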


Let $N_F$ be $(\H^3 \cup \Omega_F)/F$, which is either a compact 3-manifold or a 3-manifold with only finitely many ends, all of which are toral.
Let $\Phi_1: \Int N_F \rightarrow \H^3/G$ be a homeomorphism which was assumed to exist.
Let $S$ be a component of $\Omega_F/F$.
Let $\lambda_G$ be the total ending lamination for $(G,\phi)$ as explained in Section \ref{ending}, and consider its restriction $\lambda_G \cap S$ to $S$, which we call the total ending lamination on $S$. 

Now, let $\Omega_0$ be a component of $\Omega_F$ which covers $S$ by the canonical projection from $\Omega_F$ to $\Omega_F/F$, and  $p_0 : \Omega_F \rightarrow S$ the restriction of the projection.
Each leaf of the total ending lamination on $S$ lifts to open arcs which are geodesics with respect to a hyperbolic metric on $\Omega_0$ inducing one on $S$, which we call {\it end invariant leaves} for $S$.
More specifically, we call open arcs which are lifts of leaves of ending laminations as {\it ending lamination leaves} and those which are lifts of parabolic curves as {\it parabolic leaves}. 
A point in $\Lambda_G$ is said to be an {\it end invariant point} for $S$ when it is an endpoint at infinity of a lift of an end invariant leaf of $S$.

To prove the proposition, we shall first show the following lemma.

\begin{lem}
\label{three points}
Suppose that there are three distinct points $x, y, z \in \Lambda_F$ with $\widehat i(x)= \widehat i(y)= \widehat i(z)$.
Then there is a component $S$ of $\Omega_F/F$ and the total ending lamination $\lambda:=\lambda_S(G)$ of $G$ on $S$ as follows.
Either there is a lift $U$ in $\Omega_F$ of a complementary region of $\lambda$ whose vertices at infinity contain all of $x, y,z$ or there exist two adjacent lifts $U_1, U_2$ in $\Omega_F$ of complementary regions of $\lambda$ such that $\overline U_1 \cap \overline U_2$ contains a parabolic leaf and all of $x,y,z$ are contained in the vertices of $U_1 \cup U_2$. 
\end{lem}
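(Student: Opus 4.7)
The proof plan is to exploit Mj's identification criterion: the hypothesis $\widehat i(x)=\widehat i(y)=\widehat i(z)$ with $x,y,z$ distinct means exactly that $x,y,z$ lie in a common equivalence class of the relation $R$ on $\Lambda_F$ generated by pairs of endpoints of leaves of $\widetilde\lambda$. Hence there is a minimal tree of leaves of $\widetilde\lambda$ realising the joint $R$-equivalence of $x,y,z$. The foundational local observation is that if two distinct leaves $\ell_1,\ell_2$ of $\widetilde\lambda$ share an ideal endpoint $v$, they must be the two adjacent sides meeting at $v$ of some lift of a complementary region of $\lambda$: being disjoint geodesics in the component $\Omega_0$ of $\Omega_F$ containing them and asymptotic to $v$, they bound a leaf-free sector near $v$ that must lie in a single complementary region lift. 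Since the whole tree consists of leaves sharing endpoints in a single component $\Omega_0$, this also fixes the component $S$ of $\Omega_F/F$ and the total ending lamination $\lambda=\lambda_S(G)$ featuring in the statement.

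A minimal tree connecting three vertices in a graph has either two edges (a path whose middle vertex is one of the three) or three edges (a star with one additional central vertex $w$). \textbf{Two-edge case:} say $y$ is the middle vertex, so that $\ell_1,\ell_2$ share $y$ and the other endpoints are $x,z$. The local observation places $\ell_1,\ell_2$ as adjacent sides of a single lift $U$ of a complementary region, so that $x,y,z$ all lie in the vertices at infinity of $U$; this is conclusion (1). \textbf{Three-edge case:} the three leaves $\ell_1,\ell_2,\ell_3$ share a central endpoint $w\notin\{x,y,z\}$ with other endpoints at $x,y,z$, so $w$ is an endpoint of three distinct leaves of $\widetilde\lambda$. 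I would then invoke the classification (discussed below) that such a multi-leaf ideal point must be a parabolic fixed point of the stabiliser of $\Omega_0$ in $F$, in which case one of the three leaves, say $\ell_c$, is the parabolic leaf through $w$. From the cyclic ordering of $\ell_1,\ell_2,\ell_3$ at $w$, consecutive pairs bound lifts of complementary regions with $w$ and the two corresponding endpoints from $\{x,y,z\}$ as ideal vertices; selecting the pair $U_1,U_2$ whose shared boundary contains $\ell_c$ then gives that the vertices of $U_1\cup U_2$ contain $\{x,y,z\}$ while $\overline{U_1}\cap\overline{U_2}$ contains the parabolic leaf $\ell_c$, yielding conclusion (2).

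The main obstacle is the vertex-multiplicity classification: proving that at any non-parabolic ideal point of $\partial\Omega_0$ at most two leaves of $\widetilde\lambda$ can terminate, and that these two, when present, are adjacent sides of a single complementary region lift. Multi-leaf sharing of endpoints is therefore confined to parabolic fixed points, where the parabolic leaf is joined by ending lamination leaves spiralling into the corresponding parabolic curve of $S$. This classification exploits the minimality and filling character of the ending lamination components of $\lambda$, together with the specific asymptotic behaviour of their leaves near the parabolic curves. A subsidiary task is to rule out minimal trees with four or more edges; this follows because any longer chain must pass through a multi-leaf vertex, which by the classification is parabolic and already supports a shorter star configuration through $x,y,z$, reducing the analysis to the three-edge case.
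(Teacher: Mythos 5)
Your proposal treats the whole identification chain as living inside a single component $\Omega_0$ of $\Omega_F$: you write ``since the whole tree consists of leaves sharing endpoints in a single component $\Omega_0$, this also fixes the component $S$.'' That is precisely the point that carries most of the weight in the paper's proof, and you assume it rather than prove it. The relation $R$ is generated by endpoint pairs of end invariant leaves taken over \emph{all} components of $\Omega_F/F$, and the boundaries of distinct components of $\Omega_F$ can share points of $\Lambda_F$ (for a quasi-Fuchsian $F$ the two components of $\Omega_F$ have the same frontier), so a priori a single point of $\Lambda_F$ could be an endpoint of an end invariant leaf for $S_1$ and simultaneously for $S_2\neq S_1$, and the tree joining $x,y,z$ could mix leaves from different components — in which case the lemma's conclusion, which locates everything in one $\Omega_0$, would not follow. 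The paper rules this out by showing that if $z$ were an end invariant point for both $S_1$ and $S_2$, the two rays ending at $z$ would force the corresponding minimal components $\lambda_1,\lambda_2$ of the total ending lamination to be homotopic in $N_F$ (via the argument of Lecuire's Lemma C.1), which is impossible for ending laminations or parabolic curves sitting on distinct components of $\Omega_F/F$. Your "local observation" about the leaf-free sector only makes sense once both leaves are known to lie in the same component, so it cannot substitute for this step.

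Within a single component your combinatorial analysis is in the right spirit (and is more explicit than the paper, which simply asserts that three $R$-equivalent points in one component must be configured as in the statement). But you yourself flag the ``vertex-multiplicity classification'' — at most two leaves of $\widetilde\lambda$ end at a non-parabolic ideal point, and a point where three or more leaves end forces the middle leaf to be parabolic — as the main obstacle and leave it unproved; the underlying fact is that a leaf bounding complementary regions on both sides is isolated in $\widetilde\lambda$, and the only isolated leaves of the total ending lamination are parabolic leaves, since ending laminations are minimal and not closed curves. So the proposal as written has two genuine gaps, and the more serious one — the impossibility of a point being an end invariant point for two distinct components — is exactly where the paper invests its effort.
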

\begin{proof}
By Mj's work explained in Section \ref{CT}, the relation between two points $x, y$ in $\Lambda_F$ defined by $\widehat i(x)=\widehat i(y)$ is the transitive closure of the relation that two points are the endpoints of the same end invariant leaf.
This implies that if we only take into account one component $S$ of $\Omega_F/F$,  three points $x, y,z$ which are equivalent  must be situated as in the statement.
Therefore, to show our lemma, it is enough to prove that  for two distinct components $S_1$ and $S_2$ of $\Omega_F/F$, an end invariant point for $S_1$ cannot be that of $S_2$ at the same time.

Now, seeking a contradiction, suppose that there is a point $z \in \Lambda_F$ which is an end invariant point  for both $S_1$ and $S_2$.
Let $\Omega_1$ and $\Omega_2$ be the components of $\Omega_F$ covering $S_1$ and $S_2$ respectively.
Then there exist geodesic rays $r_1$ and $r_2$ ending at $z$ which are lifts of parabolic curves or leaves of ending laminations to $\Omega_1$ and $\Omega_2$ respectively.
Let $l_1$ and $l_2$ be their projections on $S_1$ and $S_2$.
Since the total ending lamination is decomposed into minimal components, the closures of $l_1$ and $l_2$ are minimal components of the total ending lamination, which we denote by $\lambda_1$ and $\lambda_2$.
Now, the argument of \cite[Lemma C.1]{Le} (see also \cite{LeT}) implies that $\lambda_1$ and $\lambda_2$ are homotopic in $N_F$.
Here two laminations $\lambda_1$ and $\lambda_2$ are said to be homotopic if there is a sequence of incompressible annuli $A_i$ properly embedded in $N_F$ such that $A_i \cap \partial N_F$ converges to $\lambda_1 \cup \lambda_2$ in the Hausdorff topology.
Since $\lambda_1$ and $\lambda_2$ are parabolic curves or ending laminations of $G$ lying on distinct components of $\Omega_F/F$, this is a contradiction.
\end{proof}
\begin{proof}[Proof of Proposition \ref{Complementary}]
Suppose that $z \in \Lambda_F$ is a point such that $\widehat i^{-1}(\widehat i(z))$ has more than two points.
Then by Lemma \ref{three points},  there is a component $S$ of $\Omega_F/F$, and $z$ is an ideal vertex of a lift of a complementary region of the total ending lamination for $G$ on $S$, which we denote by $\lambda$.
Note that there are only finitely many complementary regions of $\lambda$, and each of them has only finitely many ideal vertices.
Therefore, there are only countably many ideal vertices for the lifts of complementary regions.
This implies that there are only countably many points $z$ with $\#( \widehat i^{-1}(\widehat i(z))) >2$.
\end{proof}

\section{measurable rigidity}
In this section, we shall prove our main theorem.
\begin{thm:main}
Let $F$ be a geometrically finite, minimally parabolic, non-elementary, torsion-free
Kleinian group.
Suppose that we are given two representations $\rho_1, \rho_2 \in \mathcal{D}(F)$ with images $G$ and $H$ in $\pslc$ such that
there are homeomorphisms from $\H^3/F$ to $\H^3/G$ and $\H^3/H$ which induce $\rho_1$ and $\rho_2$  respectively between the fundamental groups.
We further assume that $G$ is of divergence type.
Let 
$\widehat i :\Lambda_F \ra \Lambda_G$ and $\widehat j :\Lambda_F \ra \Lambda_H$ be the Cannon-Thurston maps, which are guaranteed to exist by virtue of the work of Mj.
Let $\mu_F, \mu_G$ and  $\mu_H$ be the Patterson-Sullivan measures of $F, G$ and $H$ with the base point at the origin of the Poincar\'{e} ball. 
Then the following hold.
\begin{enumerate}
\item There exists a Borel  set $Z \subset \Lambda_G$  with $\mu_G(Z)=0$ such that $\widehat j \widehat i^{-1}$
is homeomorphism from $\Lambda_G \setminus Z$ to its image.  
\item Either $\widehat j \widehat i^{-1}$ is the restriction of a M\"{o}bius transformation or there exists 
a Borel  subset $A$ of $\Lambda_G\setminus Z$ such that $\mu_G(A)=\mu_G(\Lambda_H)$ and $\mu_H(\widehat j\widehat i^{-1}(A))=0$.  
\end{enumerate}
\end{thm:main}

The following Lemma \ref{conical} is implied from a more general result \cite[Proposition 7.5.2]{Ger}.
See also \cite[Theorem A]{JKLO}.
\begin{lem}\label{conical}
If the cardinality of $\widehat i^{-1}(\xi)$ is greater than $1$, then $\xi\in \Lambda_G$ is not a conical limit point. 
\end{lem}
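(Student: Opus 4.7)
The plan is to argue by cases based on Mj's characterization of when the Cannon-Thurston map fails to be injective, and show in each case that the image point $\xi$ cannot be a conical limit point. First I would note that, by Mj's criterion recalled in Section~\ref{CT}, if $\widehat i^{-1}(\xi)$ has more than one point then there exist distinct $x,y\in\Lambda_F$ with $\widehat i(x)=\widehat i(y)=\xi$ that are connected by a finite chain of leaves of the lifted total ending lamination $\widetilde \lambda$. It suffices to analyze a single link in that chain: a leaf $\tilde\ell$ whose two ideal endpoints both map to $\xi$. Projecting $\tilde\ell$ to $\Omega_F/F$, by the construction of the total ending lamination in Section~\ref{ending}, the projected leaf is either a parabolic leaf (the lift of a core curve of an annulus in $\Phi(C)\cap\Fr(\H^3/G)_0$) or a leaf of an ending lamination of some geometrically infinite end of $\H^3/G$.

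In the parabolic case the argument is straightforward. The projected curve is a core curve of a parabolic annulus, so the corresponding element of $F$ is mapped by $\rho_1$ to a parabolic element of $G$, and $\xi$ is precisely its parabolic fixed point. A parabolic fixed point is never a conical limit point: the geodesic ray from the base point to $\xi$ eventually enters a Margulis cusp neighbourhood and remains there, so no subsequence of orbit points $g_n\cdot o$ can approach $\xi$ while staying inside a bounded radial tube about the ray.

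The ending lamination case is where the main obstacle lies, and I would expect to have to invoke deeper geometry. Intuitively, $\xi$ is an endpoint at infinity that is approached along the direction of an ending lamination $\lambda_e$, and the geodesic ray from $o$ to $\xi$ projects to $\H^3/G$ as a ray that follows the end $e$ out to infinity without returning to any compact subset of the convex core. Hence no $G$-orbit can accumulate at $\xi$ within a bounded radial neighbourhood of the ray, so $\xi$ fails to be conical. To make this rigorous I would invoke the general statement \cite[Proposition~7.5.2]{Ger} (cf.\ also \cite[Theorem~A]{JKLO}), which yields exactly the non-conicality of endpoints of lifted ending lamination leaves identified by a Cannon-Thurston map; combined with the parabolic case above, this completes the argument.
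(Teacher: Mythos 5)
Your proposal is correct and ultimately rests on the same external results the paper itself invokes: the paper gives no independent argument for Lemma~\ref{conical} but simply derives it from \cite[Proposition~7.5.2]{Ger} (see also \cite[Theorem~A]{JKLO}), which already covers both of your cases at once. Your explicit elementary treatment of the parabolic case is a correct (if strictly unnecessary) addition, and for the essential ending-lamination case you appeal to the same citations, so the approach is essentially identical.
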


\begin{lem}\label{atomless}
For a  Kleinian group $G$ of divergence type, the Patterson-Sullivan measure $\mu_G$ has no atom.
Therefore, every countable set in $\Lambda_G$ has $\mu_G$-measure zero.
\end{lem}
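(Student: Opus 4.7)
The plan is to argue by contradiction: an atom of $\mu_G$ would propagate to a whole $G$-orbit by the conformal density transformation law, force $\mu_G$ to concentrate on that countable orbit by the ergodicity of $(\Lambda_G,\mu_G)$, and finally violate the ergodicity of the diagonal action on $(\Lambda_G)^{(2)}$ supplied by Theorem~\ref{conicalfull}.

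Suppose $\mu_G(\{\xi_0\})>0$ for some $\xi_0\in\Lambda_G$. The conformal density formulas $g^{*}\mu_o=\mu_{g^{-1}\cdot o}$ and $d\mu_x/d\mu_y(\eta)=e^{-\delta_G b_{x,\eta}(y)}$ immediately yield
\[
\mu_G(\{g\xi_0\})=e^{-\delta_G b_{g^{-1}o,\xi_0}(o)}\mu_G(\{\xi_0\})>0
\]
for every $g\in G$, since the Radon--Nikodym density is a strictly positive function. Hence the countable set $G\cdot\xi_0$ is $G$-invariant with positive $\mu_G$-mass. Because $G$ is of divergence type, the $G$-action on $(\Lambda_G,\mu_G)$ is ergodic by Theorem~\ref{conicalfull}, so $G\cdot\xi_0$ is conull; equivalently, $\mu_G$ is concentrated on a single countable orbit.

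The next step passes to $((S^2_\infty)^{(2)},\mu_G\times\mu_G)$. The product measure is concentrated on the countable set $(G\cdot\xi_0)^{(2)}$ and assigns strictly positive weight to every one of its points. By Theorem~\ref{conicalfull} the diagonal $G$-action on this space is also ergodic, so two disjoint $G$-invariant subsets of positive measure cannot coexist. On the other hand, the diagonal $G$-orbits in $(G\cdot\xi_0)^{(2)}$ are in bijection with the non-identity double cosets in $\mathrm{Stab}_G(\xi_0)\backslash G/\mathrm{Stab}_G(\xi_0)$, and this set is infinite for any non-elementary torsion-free Kleinian group: the stabilizer $H:=\mathrm{Stab}_G(\xi_0)$ is trivial, infinite cyclic, or rank-two parabolic (these being the only possibilities for a discrete torsion-free subgroup of the affine stabilizer of a point in $\hat{\C}$), while $G$ contains a free subgroup of rank two whose elements populate pairwise distinct $H$-double cosets. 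This provides at least two disjoint $G$-invariant subsets of $(G\cdot\xi_0)^{(2)}$ both carrying positive $\mu_G\times\mu_G$-mass, contradicting ergodicity.

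The only non-formal step in the above argument is the last double-coset count, and this is where the main bookkeeping lies: one must verify case by case, according to whether $H$ is trivial, infinite cyclic, or rank-two parabolic, that $G$ cannot be exhausted by $H$ together with a single further double coset $HgH$. Once atomlessness is established, the final clause of the lemma is immediate, since a countable subset of $\Lambda_G$ is a countable union of singletons of $\mu_G$-measure zero and is therefore $\mu_G$-null by $\sigma$-additivity.
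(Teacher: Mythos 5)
Your argument is correct in substance but takes a genuinely different route from the paper. The paper's proof is a short case analysis based on the dichotomy in Theorem~\ref{conicalfull}: a divergence-type group is either geometrically finite, in which case non-atomicity is quoted from \cite[Theorem 3.5.10]{Nic}, or has $\Lambda_G=S^2_\infty$, in which case $\mu_G$ is a constant multiple of the area measure and hence atomless. Your proof instead stays entirely inside the ergodic-theoretic framework: atoms propagate along an orbit with positive mass by the conformal transformation rule, and distinct diagonal $G$-orbits inside $(G\cdot\xi_0)^{(2)}$ are disjoint invariant Borel sets of positive $\mu_G\times\mu_G$-measure, contradicting the ergodicity of the diagonal action supplied by Theorem~\ref{conicalfull}. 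This is self-contained (no appeal to Nicholls or to the identification of $\mu_G$ with Lebesgue measure), and in fact your intermediate step that $G\cdot\xi_0$ is conull is not even needed for the final contradiction. Note that both proofs tacitly assume $G$ is non-elementary (the statement fails for a loxodromic cyclic group, which is of divergence type with atomic Patterson--Sullivan measure); this is harmless in the context of the main theorem.

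One step deserves tightening. The assertion that a rank-two free subgroup of $G$ has its elements in pairwise distinct $H$-double cosets, where $H=\mathrm{Stab}_G(\xi_0)$, is not literally true for an arbitrary such subgroup (distinct elements can easily fall into the same double coset), and the case-by-case count you defer can be replaced by a single observation. You only need that $H\backslash G/H$ contains at least two classes besides $H$ itself. If not, $H$ would act transitively on the infinite set $G\cdot\xi_0\setminus\{\xi_0\}$ (infinite because $[G:H]=\infty$, as $H$ is elementary while $\Lambda_G$ is infinite). But $H$ is an elementary discrete torsion-free group fixing $\xi_0$, so the closure of any single $H$-orbit in $S^2_\infty$ has at most two non-isolated points, whereas the closure of $G\cdot\xi_0\setminus\{\xi_0\}$ contains the perfect set $\Lambda_G$. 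This disposes of all possibilities for $H$ at once and closes the only real gap in your write-up.
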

\begin{proof}
A  Kleinian group  of divergence type either is geometrically finite or has the entire $S^2_\infty$ as its limit set as in Theorem \ref{conicalfull}.
If $\Lambda_G=S^2_\infty$, then $\mu_G$ coincides with the area on $S^2_\infty$ up to a constant multiple as was explained in Section \ref{conf}, and hence it has no atom.
If $G$ is geometrically finite, then $\mu_G$ has no atom either as shown in \cite[Theorem 3.5.10]{Nic} for example. 
\end{proof}

We shall construct a Borel null set $Z$ in $\Lambda_G$ such that $\widehat{j}\widehat{i}^{-1}$ is well defined and injective in $\Lambda_G\setminus Z$. 
Set $$End_F(G) := \{(\xi_1, \xi_2)\in \Lambda_F^{(2)}\mid \widehat i(\xi_1)=\widehat i(\xi_2)\}, $$
$$\text{and } End_F(H) := \{(\eta_1, \eta_2)\in \Lambda_F^{(2)}\mid \widehat j(\eta_1)=\widehat j(\eta_2)\}.$$

\begin{lem}\label{null}
$End_F(G)$ and $End_F(H)$ are closed $F$-invariant Borel null sets in the $F$-space $(\Lambda_F^{(2)}, \mu_F\times\mu_F)$.
\end{lem}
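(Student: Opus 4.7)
The plan is to treat closedness, $F$-invariance, and the null-measure assertion separately. Closedness of $End_F(G)$ in $\Lambda_F^{(2)}$ is automatic from continuity of $\widehat i$, since $End_F(G) = (\widehat i \times \widehat i)^{-1}(\Delta_{\Lambda_G}) \cap \Lambda_F^{(2)}$; the same holds for $End_F(H)$ via $\widehat j$. The $F$-invariance is likewise immediate from the equivariance $\widehat i(f\cdot \xi) = \rho_1(f)\widehat i(\xi)$ together with the injectivity of $\rho_1(f)$, and analogously for $\widehat j$.

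The substantive content is showing these sets are $\mu_F\times\mu_F$-null. Since $F$ is geometrically finite and non-elementary, Theorem \ref{conicalfull} gives that $F$ is of divergence type and that the diagonal $F$-action on $((S^2_\infty)^{(2)}, \mu_F\times\mu_F)$ is ergodic; because $\mu_F$ is supported on $\Lambda_F$ and the diagonal of $\Lambda_F$ has measure zero by Lemma \ref{atomless}, this ergodicity passes to $(\Lambda_F^{(2)}, \mu_F\times\mu_F)$. As $End_F(G)$ is a Borel $F$-invariant set, it must be either null or conull, and the task reduces to ruling out conullness.

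Suppose for contradiction that $End_F(G)$ were conull. By Fubini there is a $\mu_F$-conull set $A\subset \Lambda_F$ such that for every $\xi_1 \in A$ the fibre $\{\xi_2 : \widehat i(\xi_2) = \widehat i(\xi_1)\}$ is $\mu_F$-conull. Fixing any $\xi_1 \in A$ and putting $p := \widehat i(\xi_1)$ then forces $\widehat i$ to be $\mu_F$-a.e.\ equal to the constant $p$. Quasi-invariance of $\mu_F$ under $F$, combined with the $\rho_1$-equivariance of $\widehat i$, implies $\rho_1(f)p = p$ for every $f\in F$, so $G$ globally fixes $p \in S^2_\infty$. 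This contradicts the non-elementarity of $G$, which follows from that of $F$ since $\rho_1$ is faithful and a non-elementary group cannot embed in an elementary Kleinian group. The identical argument, replacing $\widehat i$, $\rho_1$, $G$ by $\widehat j$, $\rho_2$, $H$, handles $End_F(H)$.

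The only places where any care is needed are the transfer of ergodicity from $(S^2_\infty)^{(2)}$ to $\Lambda_F^{(2)}$ and the clean application of Fubini plus quasi-invariance in the contradiction step; everything else is formal given Theorem \ref{conicalfull} and the continuity and equivariance of the Cannon-Thurston maps.
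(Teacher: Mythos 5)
Your proof is correct, but the key step---ruling out conullness of $End_F(G)$ once ergodicity reduces the problem to a null/conull dichotomy---is handled quite differently from the paper. The paper argues topologically: since $End_F(G)$ is closed in the open set $\Lambda_F^{(2)}$ and $\mu_F$ has full support on $\Lambda_F$, a conull closed set would have to be all of $\Lambda_F^{(2)}$, and then it exhibits an explicit pair outside $End_F(G)$, namely the two fixed points of a loxodromic $\gamma\in F$ with $\rho_1(\gamma)$ loxodromic (such $\gamma$ exists because there are only finitely many conjugacy classes of accidental parabolics); these map to conical limit points of $G$, which by Lemma \ref{conical} have singleton $\widehat i$-preimages. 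You instead run a Fubini argument: conullness would force $\widehat i$ to be $\mu_F$-a.e.\ constant, and equivariance plus quasi-invariance would then give $G$ a global fixed point on $S^2_\infty$, contradicting non-elementarity. Both are sound. Your route avoids Lemma \ref{conical} (a nontrivial external input via Gerasimov/JKLO) and the full-support property of $\mu_F$, using only atomlessness (Lemma \ref{atomless}, valid since $F$ is geometrically finite hence of divergence type by Theorem \ref{conicalfull}), equivariance, and the standard fact that a Kleinian group fixing a point of $S^2_\infty$ is elementary while $G\cong F$ is not; so it is more self-contained measure-theoretically. The paper's route buys an explicit witness pair and sets up Lemma \ref{conical}, which it needs again later anyway (e.g.\ to show $\widehat i(end_F(G))$ is $\mu_G$-null). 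Your handling of closedness, invariance, and the passage of ergodicity from $(S^2_\infty)^{(2)}$ to $\Lambda_F^{(2)}$ matches the paper's and is fine.
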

\begin{proof}
We have only to show this for $End_F(G)$, for the argument for $End_F(H)$ is the same.
Since $\widehat i$ is continuous, $End_F(G)$ is closed in $\Lambda_F^{(2)}$.
We also see that $End_F(G)$ is $F$-invariant because $\widehat i$ is an equivariant map.
Since $(\Lambda_F^{(2)}, \mu_F\times \mu_F)$ is an ergodic $F$-space, we only need to show that $End_F(G)$ cannot have full $\mu_F\times\mu_F$-measure. 
Since $End_F(G)$ is closed in $\Lambda_F^{(2)}$ and $\Lambda_F^{(2)}$ is open in $\Lambda_F \times \Lambda_F$, it is sufficient to show that  $End_F(G)$ is not the entire $\Lambda_F^{(2)}$.

Choose a loxodromic element $\gamma\in F$ such that $\rho_1(\gamma)$ is also loxodromic.
Such an element exists because there are only finitely many conjugacy classes of accidental parabolic elements.
Let $\xi_1,\xi_2\in \Lambda_F$ be the fixed points of $\gamma$.
Then  $\widehat i(\xi_1)$ and $\widehat i(\xi_2)$ are conical limits points since they are loxodromic fixed points.
It follows that $(\xi_1, \xi_2)$ does not belong to $End_F(G)$ by Lemma \ref{conical}.
Therefore, $End_F(G)$ cannot be the entire $\Lambda_F^{(2)}$.
\end{proof}

Now we consider the projection maps $proj_1$ and $proj_2$ from $S^2_\infty\times S^2_\infty$ to $S^2_\infty$ sending $(\xi_1,\xi_2)$ to $\xi_1$ and to $\xi_2$ respectively. 
We set 
$$end_F(G):=proj_1(End_F(G))=proj_2(End_F(G)).$$

\begin{lem}\label{endnull}
$end_F(G)$ is a Borel null set in $(\Lambda_F, \mu_F)$.
Moreover $\widehat i$ is a homeomorphism from $\Lambda_F\setminus end_F(G)$ 
to $\Lambda_G\setminus \widehat i(end_F(G))$.
\end{lem}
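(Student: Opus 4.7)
My plan has three parts: (i) Borel measurability and $F$-invariance of $end_F(G)$, (ii) the null claim $\mu_F(end_F(G))=0$, and (iii) the homeomorphism. For (i), I would let $C \subset \Lambda_F$ be the countable (hence Borel) set of Proposition \ref{Complementary} of points whose $\widehat i$-fibre has cardinality greater than $2$, and consider the Borel set $End_F(G) \cap ((\Lambda_F \setminus C) \times (\Lambda_F \setminus C))$. On this set the projection $proj_1$ is injective, since for $x \notin C$ in $end_F(G)$ the partner $y$ is unique. By the Lusin--Souslin theorem the image $end_F(G) \setminus C$ is Borel, hence so is $end_F(G)$ after adding the countable remainder $end_F(G) \cap C$. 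The $F$-invariance of $end_F(G)$ is immediate from the $\rho_1$-equivariance of $\widehat i$.

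For (ii), I would first invoke ergodicity of the $F$-space $(\Lambda_F, \mu_F)$, which follows from Theorem \ref{conicalfull} since $F$ is geometrically finite and hence of divergence type, combined with the remarks on ergodicity of $G$-spaces in the measure-theoretic preliminaries, to reduce to ruling out that $end_F(G)$ is $\mu_F$-conull. Lemma \ref{conical} gives $end_F(G) \cap \widehat i^{-1}(\Lambda_{G,c}) = \emptyset$, so it suffices to find a subset of $\mu_F$-positive measure mapping under $\widehat i$ into $\Lambda_{G,c}$; by ergodicity this set is then conull and $end_F(G)$ null. Since $F$ is geometrically finite, $\Lambda_F \setminus \Lambda_{F,c}$ is countable and $\mu_F$-null by Lemma \ref{atomless}, so attention reduces to showing a $\mu_F$-conull subset of $\Lambda_{F,c}$ maps into $\Lambda_{G,c}$. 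I would approach this by combining Mj's explicit structural description---$end_F(G)$ is the union, over components of $\Omega_F/F$, of endpoints on the bounding Jordan curves of lifts of leaves of the total ending lamination---with the classical Birman--Series-type vanishing of Lebesgue measure for such endpoint sets on the Jordan boundary, together with a comparison of $\mu_F$ restricted to those curves with Lebesgue measure.

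For (iii), the restriction $\widehat i|_{\Lambda_F \setminus end_F(G)}$ is continuous and bijective onto $\Lambda_G \setminus \widehat i(end_F(G))$ by construction. For continuity of the inverse, suppose $\eta_n \to \eta$ in the image; by compactness of $\Lambda_F$ every subsequence of preimages has a convergent sub-subsequence, whose limit must map to $\eta$ under $\widehat i$ and, being in $\Lambda_F \setminus end_F(G)$, must equal the unique preimage of $\eta$ there, so the whole sequence converges.

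The main obstacle is the substantive step in (ii): relating $\mu_F$ restricted to the boundary Jordan curves of components of $\Omega_F$ to Lebesgue measure there, and invoking geodesic-lamination sparsity. This geometric and measure-theoretic input is not purely formal from the ingredients above and requires careful case analysis depending on whether the critical exponent of the stabiliser of each component equals $\delta_F$ or is strictly smaller.
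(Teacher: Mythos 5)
Your parts (i) and (iii) are essentially sound. For (i) you invoke Lusin--Souslin after deleting the countable set of Proposition \ref{Complementary}, whereas the paper gets Borel measurability more elementarily by writing $end_F(G)$ as a countable union of the closed sets $proj_1(End_F(G)\cap(A_i\times B_i))$ for a countable cover of $\Lambda_F^{(2)}$ by products of disjoint closed sets; both routes work. Part (iii) is the same compactness argument as the paper's closed-map observation.

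The genuine gap is in (ii), which is the heart of the lemma. Your reduction via ergodicity to ``find a set of positive $\mu_F$-measure mapping under $\widehat i$ into $\Lambda_{G,c}$'' is logically correct, but the step you propose to establish it --- Birman--Series-type sparsity of endpoints of lifted lamination leaves on the bounding curves of components of $\Omega_F$, combined with a comparison of $\mu_F$ on those boundaries with Lebesgue measure --- is not carried out and is not a routine deduction. The boundary of a component of $\Omega_F$ need not be a rectifiable Jordan curve; there is no a priori relation between the restriction of the Patterson--Sullivan measure $\mu_F$ (of dimension $\delta_F$, possibly less than $2$) to such a boundary and any linear Lebesgue measure on it; and the Birman--Series statement lives on the circle at infinity of a Fuchsian uniformisation, which is related to $\partial\Omega_0$ by a boundary map whose measure-theoretic behaviour is precisely the kind of thing in question. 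You flag this yourself as the main obstacle, so the proposal is incomplete exactly where the work is. The paper avoids all of this by a soft argument following Soma: assume $end_F(G)$ is conull; on a piece $\mathsf{A}'=proj_1(End_F(G)\cap (A\times B))\setminus Z_2$ with $A,B$ disjoint closed sets and $Z_2$ the countable set of points with more than two preimages, the partner map $\psi$ sending $a$ to the unique $b$ with $(a,b)\in End_F(G)\cap(A\times B)$ is a homeomorphism onto its image; the function $\Psi(a,a')=|cr(a,a',\psi(a'),\psi(a))|$ is then Borel, $F$-invariant, and defined on a conull subset of $\Lambda_F^{(2)}$, hence almost everywhere constant by ergodicity of the diagonal action; but since $\psi(\mathsf{A}')$ lies in $B$, at positive distance from $A$, one has $\Psi(a,a')\to\infty$ as $a,a'$ range over small balls meeting $\mathsf{A}'$ in positive measure, a contradiction. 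To complete your outline you would need either to substitute an argument of this kind for the Birman--Series step, or actually to prove the measure comparison, which is a substantial separate project.
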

\begin{proof}
We shall first prove that $end_F(G)$ is a Borel set.
We consider a countable covering $\{A_i\times B_i\}_{i\in I}$ of $\Lambda_F^{(2)}$ by  products of disjoint closed subsets $A_i, B_i$ of $\Lambda_F$. 
For each $A\times B \in \{A_i\times B_i\}$, we consider the set
$end_{A, A\times B}:=proj_1(End_F(G)\cap A\times B),$ 
and   $end_{B, A\times B}:=proj_2(End_F(G)\cap A\times B)$. 
In this setting, we claim that $end_{A, A\times B}$ is a closed subset of $A$.
To show this, we take a sequence $\{a_j\}$ in $end_{A, A\times B}$ converging to $a\in A$.
Then there exists a sequence $\{b_j\}\subset end_{B, A\times B}$ such that $(a_j, b_j)\in End_F(G)$ for each $j$. Since $End_F(G)$ is closed and $A\times B$ is compact, $\{(a_j, b_j)\}$ converges to $(a, b) \in End_F(G)$ for some $b\in B$ after passing to a subsequence. 
This shows $a\in end_{A, A\times B}$ and we have thus shown that $end_{A,A \times B}$ is closed.
Now, since $end_F(G)$ is expressed as a countable union $\bigcup\limits_{i\in I} end_{A_i, A_i\times B_i}$ of closed subsets, we see that $end_F(G)$ is a Borel set.

Secondly, we shall prove that $end_F(G)$ is a null set by extending Soma's argument in  \cite{Soma}.
Since $(\Lambda_F, \mu_F)$ is an ergodic $F$-space and $end_F(G)$ is an invariant set, 
 $end_F(G)$ is either null or conull with respect to $\mu_F$.
Suppose that $end_F(G)$ is conull.
Under this assumption, in the following we shall construct an invariant real valued function $\Psi$ which is defined on a conull set in $\Lambda_F^{(2)}$ and is Borel measurable. Since $(\Lambda_F^{(2)}, \mu_F\times \mu_F)$ is an ergodic $F$-space, $\Psi$ has to be constant almost everywhere and we shall reach a contradiction.

For simplicity, we denote $end_{A, A\times B}$ and $end_{B, A\times B}$ by $\mathsf{A}$ and $\mathsf{B}$ respectively.
Each $a\in \mathsf{A}$ corresponds to some $b\in \mathsf{B}$ with $(a,b) \in End_F(G)$ which may not be unique.
Let $Z_2$ be the set of the points $a\in \Lambda_F$ such that $\widehat i^{-1}(\widehat i(a))$ has more than two points.
Then by Proposition \ref{Complementary}, $Z_2$ is a countable set, and hence has measure zero by Lemma \ref{atomless}.

Now we claim that $\widehat i\vert_{\mathsf{A}\setminus Z_2}$ is a homeomorphism to its image.
Since we know that $\widehat i\vert_{\mathsf{A}\setminus Z_2}$ is a continuous injective map, we only need to show that it is also a closed map to its image.
Set $\mathsf{A}':=\mathsf{A}\backslash Z_2$, and let $C'$ be a closed subset of $\mathsf{A}'$. 
Then $C'=C\cap \mathsf{A}'$ for some compact set $C$ in $\Lambda_F$. 
It is clear that $\widehat i(C')\subset \widehat i(C)\cap \widehat i(\mathsf{A}\backslash Z_2)$. 
On the other hand, since $\widehat i^{-1}(\widehat i(Z_2))\cap \mathsf{A}=Z_2\cap \mathsf{A}$, we have  $\widehat i(C')=\widehat i(C)\cap \widehat i(\mathsf{A}\setminus Z_2)$.
Since $\widehat i(C)$ is compact, this shows that $\widehat i(C')$ is a closed set in  $\widehat i(\mathsf{A}\setminus Z_2)$, and hence $\widehat i\vert_{\mathsf{A}\setminus Z_2}$ is a homeomorphism to its image.

In the same way,  $\widehat i\vert_{\mathsf{B}'}$ for $\mathsf{B}':=\mathsf{B}\backslash Z_2$ is a homeomorphism to its image. 
Therefore we have a homeomorphism $\psi:= \widehat i\vert_{\mathsf{B}'}^{-1} \widehat i\vert_{\mathsf{A}'}$ from $\mathsf{A}'$ to $\mathsf{B}'$. When $A\times B=A_j\times B_j$
for some index $j\in I$, we denote this homeomorphism by $\psi_j$.

Now, we define a Borel measurable function $\Psi: (end_F(H)\backslash Z_2)^{(2)} \ra \R$ as follows.
For a given $(a,b)\in (end_F(H)\setminus Z_2)^{(2)}$, we choose two sets $A_j\times B_j$, $A_k\times B_k$ in the covering so that $a\in \mathsf{A}_j$ and $b\in \mathsf{A}_k$. 
Then we define $\Psi(a,b)$ to be the absolute cross ratio 
$$|cr(a, b, \psi_k(b),\psi_j(a))|=\frac{\vert a-\psi_k(b)\vert\vert \psi_j(a)-b\vert}{\vert a-b\vert\vert \psi_k(b)-\psi_j(a)\vert}.$$ 
By the definition of $\psi_j$ and $\psi_k$, it is clear that $\Psi(a,b)$ is determined only by $a$ and 
$b$, independently of the choice of $j, k$.
We shall drop the subscript of $\psi_j(a)$ when we do not need to specify in which $A_j$ lies $a$.
Since each $\psi_j$ is a homeomorphism, $\Psi$ is Borel measurable.
Moreover $\Psi$ is $F$-invariant because each $\psi_j$ is equivariant and the absolute cross ratio is preserved by M\"{o}bius transformations.

We shall show that $\Psi$ is not constant almost everywhere with respect to $\mu_F \times \mu_F$.
We choose an element $A\times B$ in the covering $\{A_i\times B_i\}_{i\in I}$ of $\Lambda_F^{(2)}$ such that $\mathsf{A}'$ has a positive measure with respect to $\mu_F$. 
This is possible because we are assuming $end_F(G)$ is conull.
We claim that there exists a sequence 
$\{B_{\epsilon_n}\}$ of $\epsilon_n$-open balls with respect to the spherical metric on $S^2_\infty$
such that the following holds.
For each $n$, $B_{\epsilon_n}\cap \mathsf{A}'$ has a positive $\mu_F$-measure and $B_{\epsilon_{n+1}}\subset B_{\epsilon_n}$, and the sequence $\{\epsilon_n\}$ converges to $0$ as $n \ra \infty$.
The existence of such balls can be seen by considering a finite covering of the compact set $\mathsf{A}$ by $\epsilon_n$-balls.

Next, we shall show that as $\epsilon_n\ra 0$, the infimum of $\Psi\vert_{({B_{\epsilon_n}\cap \mathsf{A}'})^{(2)}}$ goes to infinity.
Suppose not. 
Then there exist two sequences $\{a_n\},\{a_n'\}$ with $(a_n, a_n')\in ({B_{\epsilon_n}\cap \mathsf{A}'})^{(2)}$ such that $\Psi(a_n, a_n')<K$ for some constant $K$.
Now, since $\psi(\mathsf{A'})$ is contained in $\mathsf{B}$, there is a positive constant $L$ such that $|a_n-\psi(a_n')| \geq L$ and $|a_n'-\psi(a_n)|\geq L$.
On the other hand,  since $\vert a_n-a_n'\vert \leq \epsilon_n \ra 0$, it can be easily checked that $\Psi(a_n, a_n') \ra \infty$ as $n\ra \infty$, which is a contradiction.
Thus, we have shown that $\inf\Psi\vert_{({B_{\epsilon_n}\cap \mathsf{A}'})^{(2)}}$ goes to $\infty$ as $n \rightarrow \infty$.

Suppose now, seeking a contradiction, that $\Psi$ coincides with a constant $K'$ almost everywhere with respect to $\mu_F\times \mu_F$.
Then we choose $n$ large enough so that the infimum of $\Psi\vert_{(B_{\epsilon_n}\cap \mathsf{A}')^{(2)}}$ is greater than $K'$. This is a contradiction because $B_{\epsilon_n}^{(2)}$ has a positive $\mu_F\times \mu_F$ measure and thus there must be a point $(a,b)\in B_{\epsilon_n}^{(2)}$ such that $\Psi(a,b)=K'$.
Thus we have shown that $\Psi$ cannot be constant almost everywhere.

Finally, we see that $\widehat i\vert_{\Lambda_F\setminus end_F(G)}$ is also a homeomorphism to its image $\Lambda_H\setminus \widehat i(end_F(G))$ by noting that $\widehat i^{-1}(\widehat i(end_F(G)))=end_F(G)$ and that $\widehat i|_{\Lambda_F \setminus end_F(G)}$ is a closed map. 
\end{proof}

We can define $end_F(H)$ in the same way so that $\widehat j\vert_{\Lambda_F\setminus end_F(H)}$ becomes a homeomorphism to its image.
Now we define a subset $Z$ of $\Lambda_G$ as follows:
$$Z:= \widehat i(end_F(G)\cup end_F(H)).$$
\begin{lem}
$Z$ is a null set in $(\Lambda_G, \mu_G)$
\end{lem}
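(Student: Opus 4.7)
The plan is to decompose $Z=\widehat i(end_F(G))\cup \widehat i(end_F(H))$ and to bound the $\mu_G$-measure of each piece separately.

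For $\widehat i(end_F(G))$: every $\xi$ in this set has $|\widehat i^{-1}(\xi)|\geq 2$, so by Lemma~\ref{conical} it is not a conical limit point of $G$. Since $G$ is of divergence type, Theorem~\ref{conicalfull} gives $\mu_G(\Lambda_G\setminus \Lambda_{G,c})=0$, and hence $\mu_G(\widehat i(end_F(G)))=0$.

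For $\widehat i(end_F(H))$: after the first step we may restrict to the $\mu_G$-conull set $\Lambda_G\setminus \widehat i(end_F(G))$, on which $\widehat j\widehat i^{-1}$ is a continuous equivariant map into $\Lambda_H$ by Lemma~\ref{endnull}. The idea is to rerun the argument of Lemma~\ref{endnull} with $\widehat j\widehat i^{-1}$ playing the role of $\widehat i$ and with the $G$-space $(\Lambda_G,\mu_G)$ playing the role of the $F$-space $(\Lambda_F,\mu_F)$. All the needed ingredients are available: ergodicity of $(\Lambda_G,\mu_G)$ and of $(\Lambda_G^{(2)}, \mu_G\times\mu_G)$ from Theorem~\ref{conicalfull}, an analogue of Proposition~\ref{Complementary} for $\widehat j\widehat i^{-1}$ obtained by applying Proposition~\ref{Complementary} to $\widehat j$ together with the injectivity of $\widehat i$ on $\Lambda_F\setminus end_F(G)$, and a loxodromic-fixed-point pair outside the new end-set, analogously to the construction in Lemma~\ref{null}. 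Constructing the $G$-invariant Borel cross-ratio function $\Psi$ from local inverses of $\widehat j\widehat i^{-1}$ on a conull subset of $\Lambda_G^{(2)}$ and showing $\Psi$ is not constant by the same small-ball argument then contradicts ergodicity if the new end-set were $\mu_G$-conull, so it must be $\mu_G$-null.

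The hardest step will be identifying $\widehat i(end_F(H))\setminus \widehat i(end_F(G))$ with this end-set up to $\mu_G$-null sets. The only discrepancy occurs at exceptional $\xi$ for which every other $\widehat j$-partner of $\widehat i^{-1}(\xi)$ happens to lie inside $end_F(G)$; for such $\xi$, Lemma~\ref{conical} applied to $\widehat j$ forces $\widehat j\widehat i^{-1}(\xi)\in \Lambda_H\setminus \Lambda_{H,c}$, and these exceptional points are then absorbed using the standard Cannon--Thurston fact that $\widehat j\widehat i^{-1}$ sends $\mu_G$-almost every point of $\Lambda_{G,c}$ into $\Lambda_{H,c}$, together with Theorem~\ref{conicalfull} for $G$.
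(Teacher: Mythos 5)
Your decomposition of $Z$ and your treatment of $\widehat i(end_F(G))$ are exactly the paper's: non-injective points of $\widehat i$ are non-conical by Lemma \ref{conical}, and divergence type of $G$ plus Theorem \ref{conicalfull} kills them. Your main engine for $\widehat i(end_F(H))$ is also the paper's: one defines $End_G(H)\subset(\Lambda_G\setminus\widehat i(end_F(G)))^{(2)}$ using $\widehat k=\widehat j\widehat i^{-1}$, sets $end_G(H)=proj_1(End_G(H))$, and reruns the argument of Lemma \ref{endnull} verbatim on the ergodic $G$-spaces $(\Lambda_G,\mu_G)$ and $(\Lambda_G^{(2)},\mu_G\times\mu_G)$, with the countable exceptional set supplied by applying Proposition \ref{Complementary} to $\widehat j$ and pushing forward by $\widehat i$. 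Up to that point the proposal is correct and is the paper's proof.

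The gap is in your last paragraph. You correctly notice that $\widehat i(end_F(H))$ need not be contained in $end_G(H)\cup\widehat i(end_F(G))$ on the nose, the problem being points $\xi=\widehat i(x)$ with $x\notin end_F(G)$ all of whose $\widehat j$-partners lie in $end_F(G)$. But you then dispose of these by invoking ``the standard Cannon--Thurston fact that $\widehat k$ sends $\mu_G$-almost every point of $\Lambda_{G,c}$ into $\Lambda_{H,c}$.'' No such fact is available. $H$ is not assumed to be of divergence type, so $\Lambda_H\setminus\Lambda_{H,c}$ may well be $\mu_H$-conull, and there is no general principle bounding the $\mu_G$-measure of $\widehat k^{-1}$ of a set of non-conical points of $H$ --- controlling exactly that kind of quantity is the content of the lemma. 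Worse, if that transfer statement were available, the entire ergodicity/cross-ratio argument would be superfluous: by Lemma \ref{conical} applied to $\widehat j$, \emph{every} point of $end_G(H)$ (and every point of $\widehat i(end_F(H))\setminus\widehat i(end_F(G))$) is already sent by $\widehat k$ to a non-conical point of $H$, so your ``fact'' would immediately give $\mu_G(end_G(H))=0$ with no further work. So this step is circular in spirit and unsupported in substance. If you want to handle the exceptional points honestly, the right move is to enlarge the non-injectivity relation --- e.g.\ declare $\xi_1\sim\xi_2$ when $\widehat j(\widehat i^{-1}(\xi_1))\cap\widehat j(\widehat i^{-1}(\xi_2))\neq\emptyset$ with $\widehat i^{-1}$ set-valued --- so that the resulting $G$-invariant set contains all of $\widehat i(end_F(H))\setminus\widehat i(end_F(G))$ and the ergodicity/cross-ratio argument applies to it directly; the local partner maps are still homeomorphisms off countable sets, so the proof of Lemma \ref{endnull} goes through. (The paper itself simply asserts the containment $\widehat i(end_F(H))\subset end_G(H)\cup\widehat i(end_F(G))$, so you have put your finger on a point that deserves more care; but your proposed repair does not work.)
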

\begin{proof}
By Lemma \ref{conical}, $\widehat i(end_F(G))$ consists of nonconical limit points. Since $G$ is of divergence type, it has measure zero by Theorem \ref{conicalfull}. 
To see $\widehat i(end_F(H))$ is also null, 
we first note that $\widehat j\widehat i^{-1}\vert_{\Lambda_G\setminus \widehat i(end_F(G))}$ is a well-defined equivariant continuous map. This follows from the fact that $\widehat i$ gives a homeomorphism between $\Lambda_F\backslash end_F(G)$ and $\Lambda_G\backslash \widehat i(end_F(G))$ as in Lemma \ref{endnull} and $\widehat j$ is continuous. 
If we set $$End_G(H):=\{(\xi_1, \xi_2)\in (\Lambda_G\setminus\widehat i(end_F(G)))^{(2)} \vert \widehat j\widehat i^{-1}(\xi_1)=\widehat j\widehat i^{-1}(\xi_2)\},$$
 then $End_G(H)$ is a $G$-invariant closed subset in $(\Lambda_G\setminus\widehat i(end_F(G)))^{(2)}$.
We set $end_G(H):=proj_1(End_G(H))$, and can use the same argument as in the proof of Lemma \ref{endnull} to prove it is a Borel null set.

In fact, we can prove this as follows.
We consider a countable covering $\{C_j\times D_j\}_{j\in J}$ of $(end_G(H))^{(2)}$ where $C_j$ and $D_j$ are disjoint closed subsets of $\Lambda_G$ for each $j\in J$. 
Set $\mathsf{C}_j:=end_{{C_j},C_j\times D_j}=proj_1(end_G(H)\cap C_j\times D_j)$ which is closed in $\Lambda_G\backslash \widehat i(end_F(G))$.
We note that $end_G(H)=\bigcup\limits_{j\in J}\mathsf{C}_j$ to see that it is a Borel set in $\Lambda_G\backslash \widehat i(end_F(G))$. 
Then we apply Proposition \ref{Complementary} to $\widehat j$ to see $\{x\in \Lambda_F \mid \#(\widehat j^{-1}(\widehat i(x)))>2\}$ is countable, and hence its image $Z_2'$ by $\widehat i$ is also countable. 
We set $\mathsf{C}_j':=\mathsf{C}_j\backslash Z_2'$ and $\mathsf{D}_j':=\mathsf{D}_j\backslash Z_2'$ as before. Then
we also have a homeomorphism $\psi_j':=(\widehat j\widehat i^{-1}\vert_{\mathsf{C}_j'})^{-1}(\widehat j\widehat i^{-1}\vert_{\mathsf{D}_j'})$ between $\mathsf{C}_j'$ and $\mathsf{D}_j'$. Using the $\psi_j'$, we can  define an $G$-invariant Borel measurable function $\Psi'$ from $(end_G(H)\backslash Z_2')^{(2)}$ in the same way as before. 
Since $\widehat i(end_F(G))$ is a $G$-invariant null set, $end_G(H)$ is $H$-invariant, and since we are assuming $G$ is of divergence type, $end_G(H)$ is either null or conull. If it is conull, we conclude that $\Psi'$ has to be constant almost everywhere. This leads to a contradiction as before.
Since $\widehat i(end_F(H))$ is contained in $end_G(H)\cup \widehat i(end_F(G))$, it is also a null set. 
\end{proof}

\begin{lem}\label{iHborel}
$Z=\widehat i(end_F(G)\cup end_F(H))$ is a Borel set.
\end{lem}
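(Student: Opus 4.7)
The plan is to show that $Z$ is a countable union of compact sets and hence $F_\sigma$, in particular Borel. The key observation is that the proof of Lemma \ref{endnull} did not merely show that $end_F(G)$ is Borel, but actually exhibited it as a countable union of closed (hence compact, as $\Lambda_F$ is compact) subsets of $\Lambda_F$. The same argument applied to $\widehat j$ shows that $end_F(H)$ is likewise $\sigma$-compact. Since $\widehat i$ is continuous and $\Lambda_F$ is compact, the image of any compact set under $\widehat i$ is compact, so both $\widehat i(end_F(G))$ and $\widehat i(end_F(H))$ are countable unions of compact sets.

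More concretely, I would first recall the covering $\{A_i\times B_i\}_{i\in I}$ of $\Lambda_F^{(2)}$ by products of disjoint closed (equivalently compact) subsets of $\Lambda_F$. From Lemma \ref{endnull} we have $end_F(G)=\bigcup_{i\in I} end_{A_i, A_i\times B_i}$, where each $end_{A_i, A_i\times B_i}=proj_1(End_F(G)\cap (A_i\times B_i))$ is compact by the argument already given in that lemma. Applying the continuous map $\widehat i$, each $\widehat i(end_{A_i, A_i\times B_i})$ is compact in $\Lambda_G$, so
\[
\widehat i(end_F(G))=\bigcup_{i\in I}\widehat i(end_{A_i,A_i\times B_i})
\]
is $F_\sigma$.

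Next, I would repeat the same construction for $\widehat j$ in place of $\widehat i$ to produce a covering $\{A_i'\times B_i'\}_{i\in I'}$ exhibiting $end_F(H)=\bigcup_{i\in I'} end_{A_i',A_i'\times B_i'}$ as a countable union of compact sets, and conclude likewise that $\widehat i(end_F(H))=\bigcup_{i\in I'}\widehat i(end_{A_i',A_i'\times B_i'})$ is $F_\sigma$. Finally, $Z=\widehat i(end_F(G))\cup \widehat i(end_F(H))$ is a finite union of $F_\sigma$ sets, hence Borel.

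I do not expect any real obstacle here: the only subtlety is to notice that the proof of Lemma \ref{endnull} already gives $\sigma$-compactness (not merely Borelness) of $end_F(G)$ and $end_F(H)$, after which continuity of $\widehat i$ combined with compactness of $\Lambda_F$ gives the result immediately. No descriptive-set-theoretic tools beyond the fact that continuous images of compact sets are compact are required.
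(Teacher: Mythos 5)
Your proof is correct, but it takes a genuinely different route from the paper's. The paper invokes a descriptive-set-theoretic theorem (Srivastava, Theorem 4.12.4: the image of a Borel set under a countable-to-one continuous map between standard Borel spaces is Borel), with the countable-to-one property of $\widehat i$ coming from Proposition \ref{Complementary}. You instead observe that the first half of the proof of Lemma \ref{endnull} exhibits $end_F(G)$ (and likewise $end_F(H)$) not merely as Borel but as a countable union of closed subsets of the compact set $\Lambda_F$, i.e.\ as $\sigma$-compact; since continuous images of compact sets are compact, $\widehat i(end_F(G))$ and $\widehat i(end_F(H))$ are $F_\sigma$ and hence Borel. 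Your argument is more elementary and self-contained: it needs neither the Lusin--Novikov-type theorem nor any control on the fibers of $\widehat i$, only continuity. What the paper's approach buys in exchange is generality -- it would apply to any Borel subset of $\Lambda_F$, not just a $\sigma$-compact one -- and brevity once the citation is accepted. Both proofs are valid; yours arguably makes the lemma independent of Proposition \ref{Complementary}.
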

\begin{proof}
It is well known that the image of a Borel set in a standard measure space by a countable-to-one continuous map is also Borel. See \cite[Theorem 4.12.4]{Sri} for example.
We already know $end_F(G)$ is Borel. Since $end_F(H)$ is also a Borel set, which can be shown by the argument as in the first half of Lemma \ref{endnull}, we see that $Z$ is a Borel set.
\end{proof}

Thus we conclude that $Z$ is a Borel null set.
We have proved $(1)$ of Theorem \ref{thm:main}, and
$(2)$ follows from the argument by Sullivan and Tukia in \cite{Sul82, Tuk}.
Here we shall present a simplified version of the proof of \cite[Theorem 3C]{Tuk}.

First we need to restrict the underlying $\sigma$-algebra of the measure spaces $(S^2_\infty, \mu_F)$ and $((S^2_\infty)^{(2)}, \mu_F)$ to the Borel $\sigma$-algebra.
Let us  denote $\widehat j\widehat i^{-1}$ by $\widehat k$.
Then $\widehat k\vert_{\Lambda_G\setminus Z}$ is Borel measurable and also Borel {\it directly measurable} in the sense that it sends every Borel subset of $\Lambda_G\setminus Z$ to a Borel set.
We define the pull-back measure $\widehat k^*\mu_H$ on $\Lambda_G$ by setting $(\widehat k^*\mu_H)(B):=\mu_H(\widehat k(B\backslash Z))$ for any Borel set $B$ in $\Lambda_G$.
Next, we apply the Lebesgue decomposition theorem to the measure $\widehat k^*\mu_H$ on $(\Lambda_G, \mu_G)$. 
Then we get a conull Borel subset $A$ of $\Lambda_G\setminus Z$ such that $\widehat k^*\mu_H$ absolutely continuous with respect to $\mu_G$ on $A$.
 If $(\widehat k^*\mu_H)(A)=0$, then second case in $(2)$ of Theorem \ref{thm:main} holds and we are done.
 Therefore, we assume $(\widehat k^*\mu_H)(A)> 0$ from now on. 
 Then we have the Radon-Nikodym derivative $D=d(\widehat k^*\mu_H)/d\mu_G$ on $A$, which cannot be a $\mu_G$-almost everywhere zero function.

We define a $G$-invariant ergodic measure $\nu_G$ on $\Lambda_G^{(2)}$ as follows:
$$d\nu_G(\xi_1, \xi_2)=\frac{d\mu_G(\xi_1)d\mu_G(\xi_2)}{\vert \xi_1-\xi_2\vert^{\delta_G}}.$$
We can define an $H$-invariant measure $\nu_H$ on $\Lambda_H^{(2)}$ in the same way although $\nu_H$ may not be ergodic. 
Let $\nu$ be a measure on $A^{(2)}$ defined by setting $\nu(B')=\nu_H((\widehat k\times \widehat k)(B'\cap A^{(2)})$ for any Borel set $B'$ in $\Lambda_G^{(2)}$.
Then $\nu$ can be written as follows:
$$d\nu(\xi_1, \xi_2)=\frac{d(\widehat k^*\mu_H)(\xi_1)d(\widehat k^*\mu_H)(\xi_2)}{\vert \widehat k(\xi_1)-\widehat k(\xi_2)\vert^{\delta_H}}$$
for $(\xi_1,\xi_2)\in A^{(2)}$.
Since $\widehat k$ is equivariant, $\nu$ is also $G$-invariant. Thus $d\nu/d\nu_G$ is a $G$-invariant  measurable function defined $\nu_G$-almost everywhere. Due to the ergodicity of $\nu_G$, it has to be a constant $\nu_G$-almost everywhere. We can write $d\nu/d\nu_G$ as follows:
$$\frac{d\nu}{d\nu_G}(\xi_1, \xi_2)=D(\xi_1)D(\xi_2)\frac{\vert \xi_1-\xi_2\vert^{\delta_G}}{\vert \widehat k(\xi_1)-\widehat k(\xi_2)\vert^{\delta_H}}.$$
Thus $d\nu/d\nu_G$ is a nonzero constant function $\nu_G$-almost everywhere and
this implies that for almost every quadruple $(\xi_1,\xi_2,\xi_3,\xi_4)$ in $\Lambda_G^4$, 
$$cr(\xi_1,\xi_2,\xi_3,\xi_4)^{\delta_G}=cr(\widehat k(\xi_1), \widehat k(\xi_2), \widehat k(\xi_3), \widehat k(\xi_4))^{\delta_H}.$$
Now since $G$ is non-elementary,  we have  $\delta_G>0$ by \cite[Proposition 3.8]{CS}.
Therefore,  \cite[Theorem 2B]{Tuk} implies $\delta_G=\delta_H$.
Thus the above equality shows the invariance of cross ratio by $\widehat k$, and hence that $\widehat k$ is the restriction of a M\"{o}bius transformation.

\end{document}